\def\pv{\mathrm{p.v.}}
\def\reg{\mathrm{reg}}
\def\gr{\mathrm{gr}}
\title[Notes on Cohen's paper]%
{Fractal uncertainty in higher dimensions:\\
notes on Cohen's paper}
\author{Semyon Dyatlov}
\email{dyatlov@math.mit.edu}
\address{Department of Mathematics, Massachusetts Institute of Technology, Cambridge, MA 02139}
\begin{document}

\begin{abstract}
In~\cite{Cohen-FUP-v1}, Cohen proved a higher dimensional fractal uncertainty principle for line porous sets.
The purpose of this expository note is to provide a different point of view
on some parts of Cohen's proof, particularly suited to readers familiar with the theory
of distributions. It is meant to be complementary to Cohen's paper.
\end{abstract}

\maketitle

\addtocounter{section}{1}
\addcontentsline{toc}{section}{1. Introduction}

In the recent paper~\cite[Theorem~1.2]{Cohen-FUP-v1}, Cohen proved the following remarkable
\begin{theo}
  \label{t:main}
Assume that $\mathbf X,\mathbf Y\subset \mathbb R^d$ are two sets such that
for some $h\in (0,{1\over 100})$ and $\nu\in (0,{1\over 10})$
\begin{itemize}
\item $\mathbf X\subset B(0,1)$ is $\nu$-porous on balls on scales $h$ to~1, and
\item $\mathbf Y\subset B(0,h^{-1})$ is $\nu$-porous on lines on scales 1 to~$h^{-1}$.
\end{itemize}
Then we have the following estimate for all $f\in L^2(\mathbb R^d)$
\begin{equation}
  \label{e:main}
\supp\widehat f\subset \mathbf Y\quad\Longrightarrow\quad
\|\indic_{\mathbf X}f\|_{L^2(\mathbb R^d)}\leq Ch^\beta \|f\|_{L^2(\mathbb R^d)}
\end{equation}
where the constants $C,\beta>0$ depend only on~$\nu,d$ but not on~$h$.
\end{theo}
Here $B(\mathbf x,r)$ denotes the $r$-ball centered at $\mathbf x$ and the notion of porosity is given by
\begin{defi}
\label{d:porous}
Assume that $0<\rho_0\leq\rho_1$, $\nu\in(0,{1\over 10})$, and $\mathbf X\subset\mathbb R^d$.

\begin{itemize}
\item We say that $\mathbf X$ is \emph{$\nu$-porous on balls on scales $\rho_0$ to~$\rho_1$}
if for each ball $B$ of diameter $R\in [\rho_0,\rho_1]$ there exists
$\mathbf x\in B$ such that
$\mathbf X\cap B(\mathbf x,\nu R)=\emptyset$.
\item We say that $\mathbf X$ is \emph{$\nu$-porous on lines on scales $\rho_0$ to~$\rho_1$}
if for each line segment~$\tau$ of length $R\in [\rho_0,\rho_1]$ there exists
$\mathbf x\in \tau$ such that $\mathbf X\cap B(\mathbf x,\nu R)=\emptyset$.
\end{itemize}
\end{defi}
Note that porosity on lines implies porosity on balls but the converse is not true when $d\geq 2$:
any line in $\mathbb R^d$ is $\frac1{10}$-porous on balls on all scales
but not porous on lines.

Our statement of Theorem~\ref{t:main} (as well as the statements of various results below) differs from the one in~\cite{Cohen-FUP-v1} in minor details, in particular we use the following normalization of the Fourier transform following~\cite{Hormander1}:
$$
\widehat f(\xi)=\int_{\mathbb R^d}e^{-i\mathbf x\cdot\xi}f(\mathbf x)\,d\mathbf x.
$$
This expository note gives a different point of view on some parts of Cohen's proof of Theorem~\ref{t:main}.
We will freely use the theory of distributions presented for example in~\cite{Hormander1},
which is not necessary (as it is not used in~\cite{Cohen-FUP-v1}) but is convenient
in particular because of the distributional characterization of plurisubharmonicity
(Proposition~\ref{l:psh-crit} below).

This note is meant to be complementary to Cohen's paper and in particular
contains no discussion of applications of fractal uncertainty principle or previous results;
the latter can be found for instance in~\cite{FUP-ICMP}. We also only present the proof of
a part of the argument, namely Theorems~\ref{t:existence-weight}
and~\ref{t:psh-constr} below
(it is possible that a future version of this note will contain other parts as well).

We restrict to the case of dimension $d=2$. This is only for the purpose
of notational convenience, shortening various formulas along the way.
For the case of dimension $d=1$, Theorem~\ref{t:main} was proved by Bourgain--Dyatlov~\cite{fullgap}
and Cohen's result~\cite{Cohen-FUP-v1} is a far-reaching generalization of this
result to all dimensions.

\subsection{Existence of special compactly supported functions}

Theorem~\ref{t:main} is deduced in~\cite{Cohen-FUP-v1} from the result of Han--Schlag~\cite{Han-Schlag}
and a novel construction of compactly supported functions with Fourier transforms decaying on line porous sets~\cite[Proposition~1.7]{Cohen-FUP-v1}. We state a slightly weaker version of the latter construction as Theorem~\ref{t:cutoff-total} below.
(Comparing to~\cite{fullgap}, the paper~\cite{Han-Schlag} generalizes~\cite[\S\S3.2--3.4]{fullgap}
and Theorem~\ref{t:cutoff-total} generalizes~\cite[Lemma~3.1]{fullgap}.)
We denote $\langle \mathbf x\rangle:=\sqrt{1+|\mathbf x|^2}$.
\begin{theo}
\label{t:cutoff-total}
Assume that $\mathbf Y\subset B(0,h^{-1})$ is $\nu$-porous on lines on scales 1 to~$h^{-1}$.
Then for each $c_{\supp}>0$ there exist $c_2,c_3>0$ and $\alpha\in (0,1)$ depending only
on $\nu,c_{\supp}$ and there exists a function $g\in L^2(\mathbb R^2)$ satisfying the following conditions:
\begin{align}
\label{e:cutoff-total-1}
\supp \widehat g &\subset B(0,c_{\supp}),\\
\label{e:cutoff-total-2}
\|g\|_{L^2(B(0,1))}&\geq c_2,\\
\label{e:cutoff-total-3}
|g(\mathbf x)|&\leq 1 \quad\text{for all }\mathbf x\in\mathbb R^2,\\
\label{e:cutoff-total-4}
|g(\mathbf x)|&\leq \exp\bigg(-c_3{|\mathbf x|\over (\log|\mathbf x|)^\alpha}\bigg)\quad\text{
for all }\mathbf x\in \mathbf Y\setminus B(0,10). 
\end{align}
\end{theo}
We note that for $\alpha>1$, one can construct a function satisfying conditions~\eqref{e:cutoff-total-1}--\eqref{e:cutoff-total-4} with $\mathbf Y=\mathbb R^2$. However, this is impossible
for $\alpha<1$, in fact if $g$ satisfies the condition~\eqref{e:cutoff-total-4}
on the entire $\mathbb R^2$ then $\widehat g$ satisfies a unique continuation principle and thus cannot
be compactly supported. The argument of~\cite{Han-Schlag} uses this unique continuation principle
and induction on scales to show that Theorem~\ref{t:cutoff-total} implies Theorem~\ref{t:main}.

In~\cite{fullgap}, the analog of Theorem~\ref{t:cutoff-total} in the case of dimension $d=1$
was proved by constructing a weight $\omega\leq 0$ which behaves like $-|x|/(\log|x|)^\alpha$
for $x\in \mathbf Y\setminus B(0,10)$ but also satisfies $\int_{\mathbb R}\langle x\rangle^{-2}\omega(x)\,dx>-\infty$; this was possible due to the porosity of~$\mathbf Y$.
Then one used the Beurling--Malliavin Theorem to construct a function $g$ satisfying~\eqref{e:cutoff-total-1}--\eqref{e:cutoff-total-3} and $|g|\leq \exp(c_3\omega)$, which implies~\eqref{e:cutoff-total-4}.

In~\cite{Cohen-FUP-v1} Cohen also constructs a weight $\omega$ with the right kind of behavior
on~$\mathbf Y$ (stated as Theorem~\ref{t:existence-weight} below) and satisfying the right conditions so that one can construct a function $g$ satisfying~\eqref{e:cutoff-total-1}--\eqref{e:cutoff-total-3}
and $|g|\leq \exp(c_3\omega)$ (stated as Theorem~\ref{t:cutoff-functions} below). We now describe
these `right conditions', or rather their slight modification making the split into dyadic
pieces explicit. For that we need a few definitions:
\begin{itemize}
\item
Define the \emph{Kohn--Nirenberg symbolic norm} of order~1 with 3 derivatives on the space
of smooth compactly supported functions~$\CIc(\mathbb R^2)$ as follows:
\begin{equation}
  \label{e:KN-def}
\|\psi\|_{S^1_3}:=\max_{|\gamma|\leq 3}\sup_{\mathbf x\in\mathbb R^2}|\langle \mathbf x\rangle^{|\gamma|-1}\partial_{\mathbf x}^\gamma\psi(\mathbf x)|.
\end{equation}
\item 
For a function $\psi\in \CIc(\mathbb R^2)$, define its \emph{X-ray transform} as the
function $T\psi\in \CIc(\mathbb R_s\times \mathbb S^1_\theta)$, where $\mathbb S^1=\mathbb R/2\pi\mathbb Z$, given by the formula
\begin{equation}
  \label{e:X-ray-def}
T\psi(s,\theta):=\int_{\mathbb R}\psi(t\cos\theta-s\sin\theta,t\sin\theta+s\cos\theta)\,dt.
\end{equation}
Note that $T\psi(s,\theta)$ is the integral of $\psi$ over a line in the direction
given by the angle~$\theta$ which is distance $|s|$ from the origin~-- see Figure~\ref{f:X-ray}.
In particular, $T\psi(0,\theta)$ is the integral of~$\psi$ over a line passing through the origin.
\end{itemize}
\begin{figure}
\includegraphics{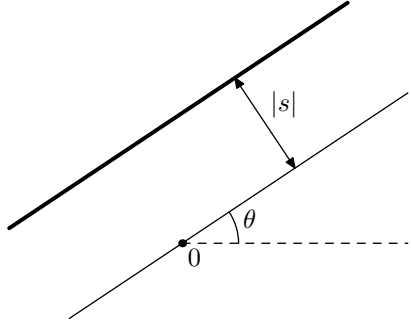}
\caption{An illustration of the X-ray transform: $T\psi(s,\theta)$ is the integral of $\psi$ over the thick line.}
\label{f:X-ray}
\end{figure}
The weight $\omega$ will have the form
\begin{equation}
  \label{e:omega-form}
\omega=\sum_{k\geq 1}\omega_k\quad\text{where }
\omega_k\in\CIc(\mathbb R^2;(-\infty,0]),\quad
k\in\mathbb N.
\end{equation}
Here the summands $\omega_k$ satisfy the following conditions
for some constants $C_{\reg},C_{\gr}\geq 0$:
\begin{align}
\label{e:omega-k-ass-1}
\omega_k&=0\quad\text{for $k$ large enough},\\
\label{e:omega-k-ass-2}
\supp\omega_k&\subset \{2^{k-1}\leq |\mathbf x|\leq 2^{k+1}\},\\
\label{e:omega-k-ass-3}
\|\omega_k\|_{S^1_3}&\leq C_{\reg},\\
\label{e:omega-k-ass-4}
\sum_{k=1}^\infty q_k&\leq C_{\gr}\quad\text{where }
q_k:=-\inf_{\theta\in\mathbb S^1}T(|\mathbf x|^{-2}\omega_k)(0,\theta).
\end{align}
We give a few comments on the above conditions:
\begin{itemize}
\item \eqref{e:omega-k-ass-1} ensures that $\omega\in\CIc(\mathbb R^2)$. It is made for technical convenience. Note that the constants in the results below do not depend on how many of
the functions~$\omega_k$ are nonzero.
\item \eqref{e:omega-k-ass-2} requires that each $\omega_k$ is supported in a dyadic annulus.
\item \eqref{e:omega-k-ass-3} is a Kohn--Nirenberg type regularity condition on each $\omega_k$,
in particular ensuring that the total weight $\omega$ is bounded in $S^1_3$. Given~\eqref{e:omega-k-ass-2}, we can rewrite it in the following form:
\begin{equation}
\label{e:omega-k-ass-3x}
\sup|\partial^\gamma_{\mathbf x}\omega_k|\leq \widetilde C_{\reg} 2^{(1-|\gamma|)k}\quad\text{for }|\gamma|\leq 3.
\end{equation}
\item \eqref{e:omega-k-ass-4} is a condition on the growth of $\omega_k$ along lines passing through
the origin. It implies in particular that $T(|\mathbf x|^{-2}\omega)(0,\theta)\geq -C_{\gr}$
for all $\theta$ but it is stronger due
to the infimum in the definition of~$q_k$. Note also that each $q_k$ is bounded in terms of the
regularity constant $C_{\reg}$ alone, more precisely we have
\begin{equation}
  \label{e:q-k-apriori}
0\leq q_k\leq 10C_{\reg}.
\end{equation}
The role of the growth constant $C_{\gr}$ is to control the sum of $q_k$ over the dyadic scales. 
\end{itemize}
We now state two theorems which together give Theorem~\ref{t:cutoff-total}. The first one (similar to the result of~\cite[\S6.1]{Cohen-FUP-v1} and proved in~\S\ref{s:weight-constr} below) is the existence of a weight satisfying the above conditions and
also sufficiently negative on a line porous set:
\begin{theo}
  \label{t:existence-weight}
Assume that $\mathbf Y\subset B(0,h^{-1})$ is $\nu$-porous on lines on scales $1$ to $h^{-1}$.
Then there exists a weight $\omega$ of the form~\eqref{e:omega-form} such that:
\begin{itemize}
\item
the summands $\omega_k$ satisfy the conditions~\eqref{e:omega-k-ass-1}--\eqref{e:omega-k-ass-4}
for some constants $C_{\reg},C_{\gr}$ depending only on the porosity constant~$\nu$, and
\item there exists $\alpha\in (0,1)$ depending only on $\nu$ such that
\begin{equation}
  \label{e:omega-large-Y}
\omega(\mathbf x)\leq -{|\mathbf x|\over (\log|\mathbf x|)^{\alpha}}\quad\text{for all }
\mathbf x\in \mathbf Y\setminus B(0,10).
\end{equation}
\end{itemize}
\end{theo}
The second one, which is perhaps the main novelty of Cohen's paper and is
a result of independent interest, is the following higher dimensional version
of the Beurling--Malliavin Theorem, see~\cite[Theorem~1.4]{Cohen-FUP-v1}:
\begin{theo}
\label{t:cutoff-functions}
Assume that $\omega$ is given by~\eqref{e:omega-form} where $\omega_k$ satisfy the
conditions~\eqref{e:omega-k-ass-1}--\eqref{e:omega-k-ass-4} for some constants $C_{\reg},C_{\gr}$.
Fix also $c_{\supp}>0$. Then there exist $c_2,c_3>0$ depending only on $C_{\reg},C_{\gr},c_{\supp}$
and there exists $g\in L^2(\mathbb R^2)$ which satisfies the conditions~\eqref{e:cutoff-total-1}--\eqref{e:cutoff-total-2} and
\begin{equation}
  \label{e:cutoff-total-omega}
|g(\mathbf x)|\leq \exp(c_3\omega(\mathbf x))\quad\text{for all }\mathbf x\in\mathbb R^2. 
\end{equation}
\end{theo}
We note that the analog of Theorem~\ref{t:cutoff-functions} in dimension $d=1$
is weaker than the Beurling--Malliavin Theorem due to the regularity assumption~\eqref{e:omega-k-ass-3}, but it suffices for the purposes of proving a Fractal Uncertainty Principle
as shown by Theorem~\ref{t:existence-weight}.

\subsection{Constructing plurisubharmonic functions on~$\mathbb C^2$}

We now discuss the proof of Theorem~\ref{t:cutoff-functions}.
By the Paley--Wiener Theorem, the compact Fourier support condition~\eqref{e:cutoff-total-1}
is essentially equivalent to the function $g$ having a holomorphic extension
$G:\mathbb C^2\to\mathbb C$ which satisfies the bound for some $C\geq 0$
\begin{equation}
  \label{e:PW-bound}
\log|G(\mathbf x+i\mathbf y)|\leq c_{\supp}|\mathbf y|+C\quad\text{for all }
\mathbf x,\mathbf y\in\mathbb R^2.
\end{equation}
In terms of the function $G$, the condition~\eqref{e:cutoff-total-omega} becomes
\begin{equation}
  \label{e:cutoff-total-omega2}
\log|G(\mathbf x)|\leq c_3\omega(\mathbf x)\quad\text{for all }\mathbf x\in\mathbb R^2.
\end{equation}
Note that~\eqref{e:cutoff-total-omega2} is stronger than~\eqref{e:PW-bound}
on~$\mathbb R^2$ (since $\omega\leq 0$) but~\eqref{e:PW-bound} needs to hold on the entire~$\mathbb C^2$.

Since $G$ is holomorphic, the function $\log|G|$ is plurisubharmonic on~$\mathbb C^2$.
Thus the existence of a nonzero function $g$ satisfying~\eqref{e:cutoff-total-1} and~\eqref{e:cutoff-total-omega} implies that there exists a plurisubharmonic function $\widetilde u:\mathbb C^2\to[-\infty,\infty)$ satisfying
\begin{align}
\label{e:u-cond-1}
\widetilde u(\mathbf x+i\mathbf y)&\leq c_3^{-1}c_{\supp}|\mathbf y|\quad\text{for all }\mathbf x,\mathbf y\in\mathbb R^2,\\
\label{e:u-cond-2}
\widetilde u(\mathbf x)&\leq \omega(\mathbf x)\quad\text{for all }\mathbf x\in\mathbb R^2.
\end{align}
Namely, one can take $\widetilde u:=c_3^{-1}(\log|G|-C)$.

Following an unpublished note by Bourgain, Cohen uses H\"ormander's $\bar\partial$-theorem
to show the converse statement: starting from a plurisubharmonic function $\widetilde u$
satisfying the bounds~\eqref{e:u-cond-1}--\eqref{e:u-cond-2} one can construct
a holomorphic function $G$ satisfying~\eqref{e:PW-bound}--\eqref{e:cutoff-total-omega2}
and obtain Theorem~\ref{t:cutoff-functions}. This is described in~\cite[\S5]{Cohen-FUP-v1}
and we do not give the details here (hopefully this part of the argument will be
in the next version of this note).

Thus Theorem~\ref{t:cutoff-functions} is reduced to constructing a plurisubharmonic function~$\widetilde u$
satisfying~\eqref{e:u-cond-1}--\eqref{e:u-cond-2}.
To do the latter, Cohen introduces the extension operator
\begin{equation}
  \label{e:E-prop}
E:\CIc(\mathbb R^2)\to C^0(\mathbb C^2)
\end{equation}
defined as follows: for $\omega\in\CIc(\mathbb R^2)$, 
\begin{equation}
  \label{e:E-def}
E\omega(\mathbf x+i\mathbf y):={1\over \pi}\int_{\mathbb R} {\omega(\mathbf x+t\mathbf y)\over 1+t^2}\,dt,\quad
\mathbf x,\mathbf y\in\mathbb R^2.
\end{equation}
Here continuity of $E\omega$ follows from the Dominated Convergence Theorem; in fact, differentiating
under the integral sign, we also have $E\omega\in C^\infty(\mathbb C^2\setminus \mathbb R^2)$. Moreover, $E\omega$ is an extension of $\omega$, namely
\begin{equation}
  \label{e:E-ext}
E\omega(\mathbf x)=\omega(\mathbf x)\quad\text{for all }\mathbf x\in\mathbb R^2.
\end{equation}
Now, the needed function $\widetilde u$ will have the form
\begin{equation}
\label{e:u-intro-form}
\widetilde u(\mathbf x+i\mathbf y):=E\widetilde\omega(\mathbf x+i\mathbf y)+\sigma|\mathbf y|,\quad
\mathbf x,\mathbf y\in\mathbb R^2
\end{equation}
where $\widetilde\omega\in\CIc(\mathbb R^2)$ is an auxiliary weight which satisfies~$\widetilde\omega\leq\omega\leq 0$
and $\sigma\geq 0$ is a constant.
We have $E\widetilde\omega\leq 0$, so
$\widetilde u$ satisfies~\eqref{e:u-cond-1} if we put $c_3:=c_{\supp}/\sigma$.
It also satisfies~\eqref{e:u-cond-2} as follows from~\eqref{e:E-ext}.

Thus we need to show that when the weight $\omega$ satisfies the conditions of Theorem~\ref{t:cutoff-functions} and $\sigma$ is large enough, the function~$\widetilde u$ is plurisubharmonic.
This is done in the following
\begin{theo}
\label{t:psh-constr}
Assume that $\omega$ is given by~\eqref{e:omega-form} where $\omega_k$ satisfy the
conditions~\eqref{e:omega-k-ass-1}--\eqref{e:omega-k-ass-4} for some constants $C_{\reg},C_{\gr}$.
Then there exists $\widetilde\omega\in \CIc(\mathbb R^2)$ such that
$\widetilde\omega\leq \omega$ everywhere and for $\sigma\geq 0$ large enough
depending only on $C_{\reg},C_{\gr}$ the function $\widetilde u$ defined by~\eqref{e:u-intro-form}
is plurisubharmonic on~$\mathbb C^2$.
\end{theo}
The proof of Theorem~\ref{t:psh-constr} is presented in this note in~\S\S\ref{s:psh-weight}--\ref{s:psh-constr}.
It largely follows Cohen's original proof in~\cite{Cohen-FUP-v1}, with a few changes:
\begin{itemize}
\item we specialize to the case of dimension 2;
\item we use the distributional criterion for plurisubharmonicity, which
eliminates Cohen's condition on Hilbert transforms on lines (see Remark~\ref{r:Hilbert} below);
\item we explicitly write the weight $\omega$ as a dyadic sum~\eqref{e:omega-form},
which makes the conditions of Theorem~\ref{t:psh-constr} more complicated
but shortens the proof a bit, and is acceptable for applications to FUP since
the weight in Theorem~\ref{t:existence-weight} is constructed as a dyadic sum as well.
\end{itemize}

\section{Preliminaries}

\subsection{X-ray transform}

We first state several basic properties of the X-ray transform defined in~\eqref{e:X-ray-def}.
Throughout this section we assume that $\psi\in\CIc(\mathbb R^2)$.
First of all, we have the discrete symmetry
\begin{equation}
  \label{e:XR-prop-discrete}
T\psi(-s,\theta)=T\psi(s,\theta+\pi).
\end{equation}
We also have equivariance under rotations: if $\mathcal R_\gamma:\mathbb R^2\to\mathbb R^2$
is the linear map given by
\begin{equation}
  \label{e:R-gamma-def}
\mathcal R_\gamma(x_1,x_2)=(x_1\cos\gamma-x_2\sin\gamma,x_1\sin\gamma+x_2\cos\gamma)
\end{equation}
then we have
\begin{equation}
    \label{e:XR-prop-rotate}
T(\psi\circ \mathcal R_\gamma)(s,\theta)=T\psi(s,\theta+\gamma).
\end{equation}
We next prove two identities featuring the X-ray transform and the Laplace
operator $\Delta=\partial_{x_1}^2+\partial_{x_2}^2$ on~$\mathbb R^2$.
First of all, for $\gamma\in\mathbb S^1$ define the constant vector field on~$\mathbb R^2$
$$
V_\gamma:=(\cos\gamma)\partial_{x_1}+(\sin\gamma)\partial_{x_2}.
$$
We treat $V_\gamma$ as a first order differential operator.

Our first identity is used in~\S\ref{s:psh-crit-proof}.
\begin{lemm}
  \label{l:XR-prop-1}
We have for all $\gamma\in\mathbb S^1$
\begin{equation}
  \label{e:XR-prop-1}
T(V_\gamma^2\psi)(s,\theta)=\sin^2(\theta-\gamma)T(\Delta\psi)(s,\theta).
\end{equation}
\end{lemm}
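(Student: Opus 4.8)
The plan is to work in the rotated orthonormal frame adapted to the line over which $T\psi(s,\theta)$ integrates. Writing $\mathbf e_\theta=(\cos\theta,\sin\theta)$ for the direction of the line and $\mathbf n_\theta=(-\sin\theta,\cos\theta)$ for the transverse direction, the defining integral~\eqref{e:X-ray-def} becomes $T\psi(s,\theta)=\int_{\mathbb R}\psi(t\mathbf e_\theta+s\mathbf n_\theta)\,dt$. Alongside $V_\theta$ I would introduce the constant transverse vector field $W_\theta:=-(\sin\theta)\partial_{x_1}+(\cos\theta)\partial_{x_2}$, so that $V_\theta$ and $W_\theta$ are differentiation along $\mathbf e_\theta$ and $\mathbf n_\theta$ respectively.

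The one genuinely useful input is the vanishing identity $T(V_\theta\phi)(s,\theta)=0$ for every $\phi\in\CIc(\mathbb R^2)$: indeed $(V_\theta\phi)(t\mathbf e_\theta+s\mathbf n_\theta)=\frac{d}{dt}\bigl[\phi(t\mathbf e_\theta+s\mathbf n_\theta)\bigr]$, so the integral over $t\in\mathbb R$ vanishes by the fundamental theorem of calculus and compact support. In words, integrating the along-the-line derivative gives zero, and this will be applied repeatedly to annihilate every term carrying a factor of $V_\theta$ on the left. I would then record two algebraic facts about constant-coefficient vector fields: inverting the rotation relating $(\partial_{x_1},\partial_{x_2})$ to $(V_\theta,W_\theta)$ gives $V_\gamma=\cos(\theta-\gamma)\,V_\theta-\sin(\theta-\gamma)\,W_\theta$, and, since the Laplacian is rotation invariant and constant fields commute, $\Delta=V_\theta^2+W_\theta^2$.

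With these in hand the computation is short. Expanding $V_\gamma^2=\cos^2(\theta-\gamma)\,V_\theta^2+\sin^2(\theta-\gamma)\,W_\theta^2-2\cos(\theta-\gamma)\sin(\theta-\gamma)\,V_\theta W_\theta$, where I use $V_\theta W_\theta=W_\theta V_\theta$ to merge the two cross terms, and then applying $T(\cdot)(s,\theta)$ and invoking the vanishing identity on $V_\theta^2\psi=V_\theta(V_\theta\psi)$ and on $V_\theta W_\theta\psi=V_\theta(W_\theta\psi)$, the first and third terms drop out, leaving $T(V_\gamma^2\psi)(s,\theta)=\sin^2(\theta-\gamma)\,T(W_\theta^2\psi)(s,\theta)$. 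Finally, applying $T(\cdot)(s,\theta)$ to $\Delta=V_\theta^2+W_\theta^2$ and using the vanishing identity once more on $T(V_\theta^2\psi)(s,\theta)=0$ gives $T(\Delta\psi)(s,\theta)=T(W_\theta^2\psi)(s,\theta)$; substituting yields exactly~\eqref{e:XR-prop-1}.

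There is no serious obstacle here: the argument is essentially bookkeeping once the adapted frame is chosen, and the only point requiring care is remembering to commute $V_\theta$ to the left before applying the vanishing identity $T(V_\theta\,\cdot)(s,\theta)=0$. As an alternative one could instead first reduce to the case $\gamma=0$ using the rotation equivariance~\eqref{e:XR-prop-rotate} together with the chain-rule identities $\partial_{x_1}^2(\psi\circ\mathcal R_\gamma)=(V_\gamma^2\psi)\circ\mathcal R_\gamma$ and $\Delta(\psi\circ\mathcal R_\gamma)=(\Delta\psi)\circ\mathcal R_\gamma$, and then prove $T(\partial_{x_1}^2\psi)(s,\theta)=\sin^2\theta\,T(\Delta\psi)(s,\theta)$ by the same frame computation; this is equivalent but slightly longer, so I would present the direct version above.
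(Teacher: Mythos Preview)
Your proof is correct and is essentially the same argument as the paper's: the paper first uses the rotational equivariance~\eqref{e:XR-prop-rotate} to reduce to $\theta=0$ (so that your $V_\theta,W_\theta$ become $\partial_{x_1},\partial_{x_2}$) and then integrates by parts in $x_1$, which is exactly your vanishing identity $T(V_\theta\phi)(s,\theta)=0$ specialized to $\theta=0$. Your version trades the preliminary reduction for a bit of frame notation, but the substance---killing all terms carrying an along-the-line derivative---is identical.
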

\begin{proof}
By the rotational equivariance~\eqref{e:XR-prop-rotate}
and the identity $(V_\gamma\psi)\circ R_\theta=V_{\gamma-\theta}(\psi\circ R_\theta)$ it suffices to consider the case $\theta=0$. Integrating by parts in~$x_1$
and using the fact that $\psi$ is compactly supported, we get
\begin{align}
  \label{e:XR-prop-int-1}
T(V_\gamma^2\psi)(s,0)&=\int_{\mathbb R}V_\gamma^2\psi(x_1,s)\,dx_1=\sin^2\gamma\int_{\mathbb R}\partial_{x_2}^2\psi(x_1,s)\,dx_1,\\
T(\Delta\psi)(s,0)&=\int_{\mathbb R}\Delta\psi(x_1,s)\,dx_1=\int_{\mathbb R}\partial_{x_2}^2\psi(x_1,s)\,dx_1
\end{align}
which together give~\eqref{e:XR-prop-1}.
\end{proof}
The next identity is used in~\S\ref{s:weight-cr}.
\begin{lemm}
\label{l:XR-prop-2}
Assume that $\psi\in\CIc(\mathbb R^2\setminus\{0\})$. Then we have for all $\theta\in\mathbb S^1$
\begin{equation}
    \label{e:XR-prop-2}
T(\Delta\psi)(0,\theta)=(\partial_\theta^2+1)T(|\mathbf x|^{-2}\psi)(0,\theta).
\end{equation}
\end{lemm}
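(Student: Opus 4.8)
The plan is to pass to polar coordinates, since at $s=0$ the X-ray transform $T\psi(0,\theta)$ is simply the integral of $\psi$ over the line through the origin in direction~$\theta$, while the weight $|\mathbf x|^{-2}$ is radial; both sides of~\eqref{e:XR-prop-2} are therefore most naturally expressed through the radial and angular variables. Concretely, I would set $\Phi(r,\theta):=\psi(r\cos\theta,r\sin\theta)$ for $r\in\mathbb R$ and $\theta\in\mathbb S^1$, allowing $r$ to range over all of~$\mathbb R$ (negative $r$ covering the opposite ray) and using $|\mathbf x|^{-2}=r^{-2}$. Reading off~\eqref{e:X-ray-def} at $s=0$ then gives
$$
T\psi(0,\theta)=\int_{\mathbb R}\Phi(r,\theta)\,dr,\qquad
T(|\mathbf x|^{-2}\psi)(0,\theta)=\int_{\mathbb R}r^{-2}\Phi(r,\theta)\,dr=:G(\theta),
$$
and the goal becomes to show that $T(\Delta\psi)(0,\theta)=G''(\theta)+G(\theta)$.

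The key computational input is the polar form of the Laplacian, $\Delta=\partial_r^2+\tfrac1r\partial_r+\tfrac1{r^2}\partial_\theta^2$, which I would note persists for all $r\neq0$: under the identification $(r,\theta)\leftrightarrow(-r,\theta+\pi)$ fixing the point $P(r,\theta)$, the substitution $r\mapsto-r$ leaves each of the three terms invariant, so the formula extends from $r>0$ to negative radii. Applying it and integrating over $r\in\mathbb R$ splits $T(\Delta\psi)(0,\theta)$ into three contributions. The $\partial_r^2\Phi$ term integrates to the boundary term $[\partial_r\Phi]$, which vanishes. For the middle term I would observe that $\tfrac1r\Phi$ is smooth and compactly supported in $r$, so $\int_{\mathbb R}\partial_r(\tfrac1r\Phi)\,dr=0$, i.e.\ $\int_{\mathbb R}\tfrac1r\partial_r\Phi\,dr=\int_{\mathbb R}r^{-2}\Phi\,dr=G(\theta)$. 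The angular term $\int_{\mathbb R}\tfrac1{r^2}\partial_\theta^2\Phi\,dr$ equals $G''(\theta)$ after differentiating under the integral sign. Summing the three pieces yields $G''(\theta)+G(\theta)=(\partial_\theta^2+1)T(|\mathbf x|^{-2}\psi)(0,\theta)$, which is~\eqref{e:XR-prop-2}.

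The one point requiring care---and the reason the hypothesis $\psi\in\CIc(\mathbb R^2\setminus\{0\})$ is needed rather than merely $\psi\in\CIc(\mathbb R^2)$---is the $1/r$ singularity at the origin, which enters both the polar Laplacian and the integration by parts. Since $\psi$ vanishes near the origin, $\Phi(r,\theta)=0$ for $|r|$ small, so every integrand above is smooth and compactly supported in $\{r\neq0\}$; consequently all boundary terms (at $r=0^{\pm}$ and at $r=\pm\infty$) vanish and each manipulation is legitimate. I expect this bookkeeping at the origin to be the only genuine obstacle, the remainder being a routine calculation. If one prefers to avoid negative radii, the same argument runs by restricting to $r>0$, proving the identity for the integral over a single ray, and then adding the contributions of the rays in directions $\theta$ and $\theta+\pi$ via the discrete symmetry~\eqref{e:XR-prop-discrete}.
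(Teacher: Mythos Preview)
Your proof is correct and is essentially the same computation as the paper's, carried out in a slightly different order: the paper first uses rotational equivariance~\eqref{e:XR-prop-rotate} to reduce to $\theta=0$ and then differentiates the right-hand side twice in~$\theta$ in Cartesian coordinates, whereas you work at general~$\theta$ and instead expand the left-hand side via the polar Laplacian. Both arguments hinge on the same integration by parts in the radial variable (the paper's identity $\int_{\mathbb R}\partial_t\big(\partial_{x_1}\psi(t,0)+t^{-1}\psi(t,0)\big)\,dt=0$ is exactly your observation that $\int_{\mathbb R}\partial_r(r^{-1}\Phi)\,dr=0$), and both use the hypothesis $\psi\in\CIc(\mathbb R^2\setminus\{0\})$ in the same place, to ensure the $1/r$ singularity is harmless.
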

\begin{proof}
By rotational equivariance~\eqref{e:XR-prop-rotate} it suffices to show~\eqref{e:XR-prop-2} when $\theta=0$.
For all $\theta\in\mathbb S^1$ we have
$$
T(|\mathbf x|^{-2}\psi)(0,\theta)=\int_{\mathbb R}t^{-2}\psi(t\cos\theta,t\sin\theta)\,dt.
$$
Differentiating this twice at $\theta=0$ we get
$$
\begin{aligned}
    (\partial_\theta^2+1)T(|\mathbf x|^{-2}\psi)(0,0)&=
    \int_{\mathbb R}\partial_{x_2}^2\psi(t,0)-t^{-1}\partial_{x_1}\psi(t,0)+t^{-2}\psi(t,0)\,dt\\
    &=\int_{\mathbb R}(\Delta\psi)(t,0)\,dt=T(\Delta\psi)(0,0)
\end{aligned}
$$
where in the last line we used that
$$
\int_{\mathbb R}\partial_t\big(\partial_{x_1}\psi(t,0)+t^{-1}\psi(t,0)\big)\,dt=0.
$$
\end{proof}

\subsection{X-ray transform and Hilbert transform}

We next explain how the X-ray transform is related to the Hilbert transform.
This is only used in Remark~\ref{r:Hilbert} rather than in the main proof.

The Hilbert transform is defined as the following convolution:
$$
\mathcal H:\CIc(\mathbb R)\to C^\infty(\mathbb R)\cap L^\infty(\mathbb R),\quad
\mathcal Hf={1\over \pi}\Big(\pv {1\over x}\Big)*f
$$
where the principal value distribution $\pv{1\over x}\in\mathscr S'(\mathbb R)$ is the (distributional)
derivative of $\log|x|\in L^1_{\loc}(\mathbb R)$. Denote by $\mathcal Hf'$ the derivative of $\mathcal Hf$.

We show that if $\psi\in\CIc(\mathbb R^2)$ and $\ell$ is a line in $\mathbb R^2$ passing through a point $\mathbf x_0$, then
the Hilbert transform of the restriction $\psi|_{\ell}$ at $\mathbf x_0$ can be expressed as a weighted integral of the X-ray transform
$T(\Delta \psi)$ over the lines passing through $\mathbf x_0$. For simplicity we restrict to the case
when $\mathbf x_0=0$ and the line $\ell$ is horizontal:
\begin{lemm}
  \label{l:XR-Hilbert}
Let $\psi\in\CIc(\mathbb R^2)$ and $\varphi\in\CIc(\mathbb R)$ be defined by $\varphi(x_1):=\psi(x_1,0)$. Then
\begin{equation}
  \label{e:XR-Hilbert}
\mathcal H\varphi'(0)=-{1\over 4\pi}\int_{\mathbb S^1}|\sin\theta|\,T(\Delta\psi)(0,\theta)\,d\theta.
\end{equation}
\end{lemm}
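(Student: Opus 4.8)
The plan is to translate both sides of~\eqref{e:XR-Hilbert} into integrals against an explicit kernel and then match them via a single distributional identity on $\mathbb R^2$. First I would rewrite the left-hand side as a one-dimensional $\log$-integral. Since $\pv{1\over x}$ is the distributional derivative of $\log|x|$ and convolution commutes with differentiation, for $\varphi\in\CIc(\mathbb R)$ we have $\mathcal H\varphi'=\tfrac1\pi\big(\pv{1\over x}\big)*\varphi'=\tfrac1\pi\log|\cdot|*\varphi''$, so that
\[
\mathcal H\varphi'(0)={1\over\pi}\int_{\mathbb R}\log|y|\,\varphi''(y)\,dy
={1\over\pi}\int_{\mathbb R}\log|y|\,\partial_{x_1}^2\psi(y,0)\,dy,
\]
the integral being absolutely convergent because $\log|\cdot|\in L^1_{\loc}$ and $\varphi''$ is compactly supported.

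Next I would convert the right-hand side into a two-dimensional integral. Writing $T(\Delta\psi)(0,\theta)=\int_{\mathbb R}(\Delta\psi)(t\cos\theta,t\sin\theta)\,dt$ and passing to polar-type coordinates $\mathbf x=(t\cos\theta,t\sin\theta)$ (using that the line integral over $t\in\mathbb R$ together with $\theta\in\mathbb S^1$ doubly covers $\mathbb R^2$, and that $|\sin\theta|=|x_2|/|\mathbf x|$ while $dt\,d\theta=|\mathbf x|^{-1}\,d\mathbf x$), I expect to obtain
\[
-{1\over 4\pi}\int_{\mathbb S^1}|\sin\theta|\,T(\Delta\psi)(0,\theta)\,d\theta
=-{1\over 2\pi}\int_{\mathbb R^2}{|x_2|\over|\mathbf x|^2}\,\Delta\psi(\mathbf x)\,d\mathbf x
=-{1\over 2\pi}\big\langle\Delta w,\psi\big\rangle,
\]
where $w(\mathbf x):={|x_2|/|\mathbf x|^2}\in L^1_{\loc}(\mathbb R^2)$ and $\Delta w$ is its distributional Laplacian.

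Comparing the two expressions, the lemma reduces to the single distributional identity
\[
\Delta\Bigl({|x_2|\over|\mathbf x|^2}\Bigr)=-2\,\partial_{x_1}^2\bigl(\log|x_1|\,\delta(x_2)\bigr)\quad\text{on }\mathbb R^2.
\]
To verify it I would note that $w$ is smooth and harmonic off the line $\{x_2=0\}$, so $\langle\Delta w,\psi\rangle=\int_{\mathbb R^2}w\,\Delta\psi=\lim_{\epsilon\to0}\int_{\{|x_2|>\epsilon\}}w\,\Delta\psi$, and apply Green's second identity on the two half-planes $\{\pm x_2>\epsilon\}$. The resulting boundary integrals along $x_2=\pm\epsilon$ involve $w(x_1,\pm\epsilon)={\epsilon/(x_1^2+\epsilon^2)}$ and $\partial_{x_2}w(x_1,\pm\epsilon)=\pm{(x_1^2-\epsilon^2)/(x_1^2+\epsilon^2)^2}$. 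The term carrying $w(x_1,\pm\epsilon)$ is paired with the difference $\partial_{x_2}\psi(x_1,-\epsilon)-\partial_{x_2}\psi(x_1,\epsilon)=O(\epsilon)$, so it vanishes in the limit, while the term carrying $\partial_{x_2}w$ collapses to $2\int K_\epsilon(x_1)\psi(x_1,0)\,dx_1$ with $K_\epsilon={(x_1^2-\epsilon^2)/(x_1^2+\epsilon^2)^2}=-\partial_{x_1}^2\big[\tfrac12\log(x_1^2+\epsilon^2)\big]$.

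The main obstacle is precisely this last limit: the singularity of $w$ at the origin lies on the kink line $\{x_2=0\}$, and a naive jump-of-normal-derivative computation yields the non-integrable density $2/x_1^2$, which only makes sense after regularization. The point is that $K_\epsilon$ is the \emph{second derivative} of a regularized logarithm, so integrating by parts twice and using $\tfrac12\log(x_1^2+\epsilon^2)\to\log|x_1|$ in $L^1_{\loc}$ gives $\langle\Delta w,\psi\rangle=-2\int_{\mathbb R}\log|x_1|\,\partial_{x_1}^2\psi(x_1,0)\,dx_1$; this is exactly what produces the $\log|x_1|$ weight in the first paragraph. Substituting back yields ${-\tfrac1{2\pi}}\langle\Delta w,\psi\rangle={1\over\pi}\int_{\mathbb R}\log|y|\,\partial_{x_1}^2\psi(y,0)\,dy=\mathcal H\varphi'(0)$, completing the proof.
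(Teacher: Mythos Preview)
Your argument is correct and takes a genuinely different route from the paper. The paper works entirely on the Fourier side: it writes $\mathcal H\varphi'(0)=\tfrac{1}{4\pi^2}\int_{\mathbb R^2}|\xi_1|\,\widehat\psi(\xi)\,d\xi$ using that the Hilbert transform is the multiplier $-i\sgn\xi_1$, computes the one-dimensional Fourier transform of $s\mapsto T(\Delta\psi)(s,\theta)$ to get $T(\Delta\psi)(0,\theta)=-\tfrac{1}{2\pi}\int_{\mathbb R}\eta^2\widehat\psi(-\eta\sin\theta,\eta\cos\theta)\,d\eta$, and then matches the two by passing to polar coordinates in frequency. Your proof stays in physical space: you rewrite $\mathcal H\varphi'(0)$ as a pairing with $\log|x_1|$, rewrite the right-hand side as $-\tfrac{1}{2\pi}\langle\Delta w,\psi\rangle$ with $w=|x_2|/|\mathbf x|^2$, and then identify the distribution $\Delta w$ by a Green's-formula limit, the key step being to recognize the boundary kernel $K_\epsilon=(x_1^2-\epsilon^2)/(x_1^2+\epsilon^2)^2$ as $-\partial_{x_1}^2\bigl[\tfrac12\log(x_1^2+\epsilon^2)\bigr]$ so that two integrations by parts regularize the apparent $1/x_1^2$ singularity. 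The Fourier argument is shorter and makes the appearance of $|\sin\theta|$ transparent (it is just $|\xi_1|/|\xi|$ in polar coordinates), while your approach avoids Fourier inversion altogether and yields the explicit distributional identity $\Delta(|x_2|/|\mathbf x|^2)=-2\,\partial_{x_1}^2(\log|x_1|\,\delta(x_2))$, which is of some independent interest and in particular subsumes the special case handled in the paper's Remark following the statement.
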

\begin{rema}
In case $\supp\psi\subset \mathbb R^2\setminus \{0\}$, \eqref{e:XR-Hilbert} follows from~\eqref{e:XR-prop-2},
integration by parts in~$\theta$, and~\eqref{e:XR-prop-discrete} since
$(\partial_\theta^2+1)|\sin\theta|=2(\delta(\theta)+\delta(\theta-\pi))$ and
$$
\mathcal H\varphi'(0)=-{1\over\pi}\int_{\mathbb R}x_1^{-2}\psi(x_1,0)\,dx_1=-{1\over \pi}T(|\mathbf x|^{-2}\psi)(0,0).
$$
\end{rema}
\begin{proof}
We express both sides of~\eqref{e:XR-Hilbert} in terms of the Fourier transform $\widehat\psi$.

\noindent 1. First of all, by the Fourier Inversion Formula we have
$$
\widehat\varphi(\xi_1)={1\over 2\pi}\int_{\mathbb R}\widehat\psi(\xi_1,\xi_2)\,d\xi_2.
$$
The Hilbert transform is a Fourier multiplier, more precisely
$$
\widehat{\mathcal H\varphi}(\xi_1)=-i(\sgn\xi_1) \widehat \varphi(\xi_1).
$$
Using the Fourier Inversion Formula again, we get a formula for the left-hand side of~\eqref{e:XR-Hilbert}:
\begin{equation}
  \label{e:XR-Hilbert-int-1}
\begin{aligned}
\mathcal H\varphi'(0)&={1\over 2\pi}\int_{\mathbb R}\widehat{\mathcal H\varphi'}(\xi_1)\,d\xi_1
={1\over 2\pi}\int_{\mathbb R}|\xi_1|\,\widehat\varphi(\xi_1)\,d\xi_1\\
&={1\over 4\pi^2}\int_{\mathbb R^2}|\xi_1|\,\widehat\psi(\xi_1,\xi_2)\,d\xi_1d\xi_2.
\end{aligned}
\end{equation}

\noindent 2. For a fixed $\theta\in\mathbb S^1$, the Fourier transform of $T(\Delta\psi)$ in the $s$ variable is given by
$$
\begin{aligned}
\int_{\mathbb R}e^{-i\eta s}T(\Delta\psi)(s,\theta)\,ds
&=\int_{\mathbb R^2}e^{-i\eta s}\Delta\psi(t\cos\theta-s\sin\theta,t\sin\theta+s\cos\theta)\,dtds\\
&=\int_{\mathbb R^2}e^{-i\eta(x_2\cos\theta-x_1\sin\theta)}\Delta\psi(x_1,x_2)\,dx_1dx_2\\
&=-\eta^2\widehat\psi(-\eta\sin\theta,\eta\cos\theta).
\end{aligned}
$$
Here in the second line we make the linear change of variables $(x_1,x_2)=\mathcal R_\theta(t,s)$ where $\mathcal R_\theta$ is the rotation
defined in~\eqref{e:R-gamma-def}.

By the Fourier Inversion Formula, we get the following expression of the X-ray transform $T(\Delta\psi)(0,\theta)$
in terms of the Fourier transform of~$\psi$:
\begin{equation}
  \label{e:XR-Hilbert-int-2}
T(\Delta\psi)(0,\theta)=-{1\over 2\pi}\int_{\mathbb R}\eta^2\,\widehat\psi(-\eta\sin\theta,\eta\cos\theta)\,d\eta.
\end{equation}

\noindent 3. We now write the integral~\eqref{e:XR-Hilbert-int-1} in polar coordinates $(\xi_1,\xi_2)=(-\eta\sin\theta,\eta\cos\theta)$:
$$
\begin{aligned}
\mathcal H\varphi'(0)&={1\over 4\pi^2}\int_0^\infty \eta^2\int_{\mathbb S^1}|\sin\theta|\,\widehat\psi(-\eta\sin\theta,\eta\cos\theta)\,d\theta d\eta\\
&={1\over 8\pi^2}\int_{\mathbb R\times\mathbb S^1}\eta^2|\sin\theta|\,\widehat\psi(-\eta\sin\theta,\eta\cos\theta)\,d\theta d\eta\\
&=-{1\over 4\pi}\int_{\mathbb S^1}|\sin\theta|\,T(\Delta\psi)(0,\theta)\,d\theta
\end{aligned}
$$
which gives~\eqref{e:XR-Hilbert}. Here in the second line we use that the inner integral $d\theta$ is an even function of $\eta\in\mathbb R$
and in the last line we use~\eqref{e:XR-Hilbert-int-2}.
\end{proof}

\subsection{Plurisubharmonic functions}

We review the concept of plurisubharmonicity on~$\mathbb C^2$. We first give the classical definition:
\begin{defi}
\label{d:psh}
Let $u:\mathbb C^2\to [-\infty,\infty)$ be an upper semicontinuous function not identically
equal to~$-\infty$. We say $u$ is plurisubharmonic
if for each $\mathbf z,\mathbf v\in\mathbb C^2$ we have the sub-mean value property
\begin{equation}
  \label{e:psh}
u(\mathbf z)\leq {1\over 2\pi}\int_{\mathbb S^1} u(\mathbf z+e^{i\theta}\mathbf v)\,d\theta.
\end{equation}
\end{defi}
In this paper we will instead use the alternative definition in terms of partial derivatives
defined using the operators (where we write points in $\mathbb C^2$ as
$\mathbf z=(x_1+iy_1,x_2+iy_2)$)
$$
\partial_{z_j}=\tfrac12(\partial_{x_j}-i\partial_{y_j}),\quad
\partial_{\bar z_j}=\tfrac12(\partial_{x_j}+i\partial_{y_j}).
$$
For $u\in C^2(\mathbb C^2;\mathbb R)$ we define the Hermitian matrix
\begin{equation}
  \label{e:d-dbar}
\partial_{\bar{\mathbf z}\mathbf z}u:=\begin{pmatrix}
\partial_{\bar z_1}\partial_{z_1}u & \partial_{\bar z_1}\partial_{z_2}u \\
\partial_{\bar z_2}\partial_{z_1}u & \partial_{\bar z_2}\partial_{z_2}u
\end{pmatrix}.
\end{equation}
In terms of real derivatives $\partial_{x_j},\partial_{y_j}$ the entries of this matrix take the form
\begin{equation}
  \label{e:d-bar-real}
\begin{aligned}
\partial_{\bar z_1}\partial_{z_1}u&=\tfrac 14
(\partial_{x_1}^2+\partial_{y_1}^2)u,\\
\partial_{\bar z_1}\partial_{z_2}u&= \tfrac14\big((\partial_{x_1}\partial_{x_2}+\partial_{y_1}\partial_{y_2})u+i(\partial_{y_1}\partial_{x_2}-\partial_{x_1}\partial_{y_2})u\big), \\
\partial_{\bar z_2}\partial_{z_1}u&=
\tfrac14\big((\partial_{x_1}\partial_{x_2}+\partial_{y_1}\partial_{y_2})u+i(\partial_{x_1}\partial_{y_2}-\partial_{y_1}\partial_{x_2})u\big),\\
\partial_{\bar z_2}\partial_{z_2}u&=\tfrac14(\partial_{x_2}^2+\partial_{y_2}^2)u.
\end{aligned}
\end{equation}
Then the $C^2$ function~$u$ is plurisubharmonic if and only if the matrix $\partial_{\bar{\mathbf z}\mathbf z}u(\mathbf z)$ is nonnegative for each $\mathbf z\in\mathbb C^2$, namely
$$
\langle\partial_{\bar{\mathbf z}\mathbf z}u(\mathbf z)\mathbf v,\mathbf v\rangle_{\mathbb C^2}\geq 0\quad\text{for all }
\mathbf z,\mathbf v\in\mathbb C^2,
$$
where $\langle \mathbf z,\mathbf w\rangle_{\mathbb C^2}:=z_1\overline{w_1}+z_2\overline{w_2}$ is the standard Hermitian inner product on~$\mathbb C^2$.

The functions we use later will not be in~$C^2$. However, for general functions there is still a second derivative
criterion for plurisubharmonicity if one uses distributional derivatives~\cite[Theorem~4.1.11]{Hormander1}.
In the special case of continuous functions it takes the following form:
\begin{prop}
  \label{l:psh-crit}
Let $u\in C^0(\mathbb C^2;\mathbb R)$. Then $u$ is plurisubharmonic if and only if for each $\mathbf v\in\mathbb C^2$
we have
\begin{equation}
  \label{e:psh-crit}
\langle\partial_{\bar{\mathbf z}\mathbf z}u(\mathbf z)\mathbf v,\mathbf v\rangle_{\mathbb C^2}\geq 0
\end{equation}
where the left-hand side of~\eqref{e:psh-crit} is considered as a distribution in~$\mathcal D'(\mathbb C^2)$.
\end{prop}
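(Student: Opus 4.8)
The plan is to reduce the statement to the $C^2$ criterion recorded just above by mollification, treating both implications in parallel. I would fix a nonnegative $\chi\in\CIc(\mathbb C^2)$ with $\int\chi=1$, set $\chi_\epsilon(\mathbf z):=\epsilon^{-4}\chi(\mathbf z/\epsilon)$, and let $u_\epsilon:=u*\chi_\epsilon\in C^\infty(\mathbb C^2;\mathbb R)$. Because $u$ is continuous it is locally integrable, so $u_\epsilon$ is well defined, and $u_\epsilon\to u$ locally uniformly as $\epsilon\to0$; in particular $u_\epsilon\to u$ in $\mathcal D'(\mathbb C^2)$. The argument rests on two structural facts. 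First, convolution against the nonnegative kernel $\chi_\epsilon$ preserves the sub-mean value property~\eqref{e:psh}: integrating the inequality for $u$ against $\chi_\epsilon$ and applying Fubini shows that if $u$ is plurisubharmonic then so is each $u_\epsilon$. Second, differentiation commutes with convolution, so for every fixed $\mathbf v\in\mathbb C^2$ the constant-coefficient second-order operator $\langle\partial_{\bar{\mathbf z}\mathbf z}(\cdot)\mathbf v,\mathbf v\rangle_{\mathbb C^2}$ satisfies the distributional identity
$$
\langle\partial_{\bar{\mathbf z}\mathbf z}u_\epsilon(\mathbf z)\mathbf v,\mathbf v\rangle_{\mathbb C^2}=\big(\langle\partial_{\bar{\mathbf z}\mathbf z}u\cdot\mathbf v,\mathbf v\rangle_{\mathbb C^2}\big)*\chi_\epsilon,
$$
whose left-hand side is a genuine smooth function.

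For the forward implication I would assume $u$ plurisubharmonic. Then each $u_\epsilon$ is smooth and plurisubharmonic, so the $C^2$ criterion gives $\langle\partial_{\bar{\mathbf z}\mathbf z}u_\epsilon(\mathbf z)\mathbf v,\mathbf v\rangle_{\mathbb C^2}\geq0$ pointwise for every $\mathbf v$. Letting $\epsilon\to0$ and using $u_\epsilon\to u$ in $\mathcal D'$ together with the continuity of distributional differentiation, the distributions $\langle\partial_{\bar{\mathbf z}\mathbf z}u_\epsilon\cdot\mathbf v,\mathbf v\rangle_{\mathbb C^2}$ converge to $\langle\partial_{\bar{\mathbf z}\mathbf z}u\cdot\mathbf v,\mathbf v\rangle_{\mathbb C^2}$; since a $\mathcal D'$-limit of distributions that pair nonnegatively with every nonnegative test function is again nonnegative, this yields~\eqref{e:psh-crit}.

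For the converse I would assume~\eqref{e:psh-crit} for each $\mathbf v$. By the displayed identity, $\langle\partial_{\bar{\mathbf z}\mathbf z}u_\epsilon\cdot\mathbf v,\mathbf v\rangle_{\mathbb C^2}$ is the convolution of a nonnegative distribution with the nonnegative function $\chi_\epsilon$, hence a nonnegative smooth function; the $C^2$ criterion then shows each $u_\epsilon$ is plurisubharmonic and so satisfies~\eqref{e:psh}. Passing to the limit $\epsilon\to0$ is legitimate because the locally uniform convergence $u_\epsilon\to u$ is uniform on the compact circle $\{\mathbf z+e^{i\theta}\mathbf v:\theta\in\mathbb S^1\}$, which lets me interchange the limit with the average over $\mathbb S^1$; this gives~\eqref{e:psh} for $u$, and upper semicontinuity is automatic for the continuous $u$, so $u$ is plurisubharmonic.

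The argument is essentially soft, so I expect no deep obstacle; the only points requiring genuine care are the two structural facts isolated at the start — that convolution against a nonnegative kernel preserves~\eqref{e:psh}, and that it commutes with the operator $\langle\partial_{\bar{\mathbf z}\mathbf z}(\cdot)\mathbf v,\mathbf v\rangle_{\mathbb C^2}$ at the level of distributions. The continuity hypothesis $u\in C^0$ is precisely what makes these painless: it guarantees local integrability, upgrades $u_\epsilon\to u$ from $L^1_{\loc}$ to locally uniform, and removes any $-\infty$ values or separate upper-semicontinuity verification that would otherwise have to be handled as in the general statement~\cite[Theorem~4.1.11]{Hormander1}.
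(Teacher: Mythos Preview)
Your argument is correct. The paper does not supply its own proof of this proposition: it simply records the statement as the continuous-function specialization of \cite[Theorem~4.1.11]{Hormander1} and moves on. The mollification argument you outline is precisely the standard route to that theorem, and the continuity assumption indeed short-circuits the technicalities (upper semicontinuity, $-\infty$ values, monotone limits) that the general version must handle.
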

Here for a distribution $v\in\mathcal D'(\mathbb C^2)$ we say that $v\geq 0$ if $(v,\varphi)\geq 0$
for all test functions $\varphi\in\CIc(\mathbb C^2)$ such that $\varphi\geq 0$, where the distributional pairing $(v,\varphi)=v(\varphi)$
is defined in the usual way, in particular if $v\in L^1_{\loc}(\mathbb C^2)$ then
$$
(v,\varphi):=\int_{\mathbb C^2}v(\mathbf z)\varphi(\mathbf z)\,dx_1dx_2dy_1dy_2.
$$


\subsection{Kohn--Nirenberg norms}

Here we prove some basic estimates featuring the Kohn--Nirenberg
norm $\|\bullet\|_{S^1_3}$ defined in~\eqref{e:KN-def}; these are used in~\S\ref{s:weight-mod} below.
We denote by $K$ a global constant whose precise value may change from place to place.

Let $k\geq 1$ and define the rescaling map
$$
\Lambda_k:\mathbb R^2\to\mathbb R^2,\quad
\Lambda_k(\mathbf x):=2^k\mathbf x.
$$
Assume that
$$
\psi\in\CIc(\mathbb R^2),\quad
\supp\psi\subset \{2^{k-1}\leq |\mathbf x|\leq 2^{k+1}\}.
$$
Then the pullback $\Lambda_k^*\psi:=\psi\circ\Lambda_k$ is supported in $\{\tfrac 12\leq |\mathbf x|\leq 2\}$, and we have
\begin{equation}
  \label{e:KN-dilated}
K^{-1} 2^k\|\psi\|_{S^1_3}\leq \|\Lambda_k^*\psi\|_{C^3(\mathbb R^2)}\leq K2^k\|\psi\|_{S^1_3}.
\end{equation}
Since $T(|\mathbf x|^{-2}\psi)(0,\theta)=2^{-k}T(|\mathbf x|^{-2}\Lambda_k^*\psi)(0,\theta)$,
we get the following bound on the X-ray transform of $|\mathbf x|^{-2}\psi$
on lines through the origin:
\begin{equation}
  \label{e:XR-KN-OK}
\|T(|\mathbf x|^{-2}\psi)(0,\theta)\|_{C^3(\mathbb S^1)}\leq K \|\psi\|_{S^1_3}.
\end{equation}
We also have the following bound: if
$\chi\in\CIc((\tfrac 12,2))$, $F\in C^\infty(\mathbb S^1)$, and we use polar coordinates
$\mathbf x=(r\cos\theta,r\sin\theta)$ with $r\geq 0$, $\theta\in\mathbb S^1$,
then
\begin{equation}
  \label{e:XR-KN-OK2}
\|\chi(2^{-k}r)F(\theta)\|_{S^1_3}\leq K2^{-k}\|\chi\|_{C^3(\mathbb R)}\|F\|_{C^3(\mathbb S^1)}.
\end{equation}

\subsection{Line porous sets}

We finally prove several statements about line porous sets.
We start with a result about porous subsets of $\mathbb R$;
note that porosity on lines is equivalent to porosity on balls in dimension~$d=1$.
\begin{lemm}
\label{l:porosity-count-1D}
Assume that $Y\subset\mathbb R$ is $\nu$-porous on scales $\rho_0$ to $\rho_1$
and $I\subset\mathbb R$ is an interval of length $|I|\leq \rho_1$.
Then the Lebesgue measure of~$Y\cap I$ satisfies the bound
\begin{equation}
  \label{e:porosity-count-1D}
|Y\cap I|\leq |I|^\delta\rho_0^{1-\delta}
\end{equation}
for some constant $\delta\in (0,1)$ depending only on~$\nu$.
\end{lemm}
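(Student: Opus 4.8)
The plan is to run a self-similar subdivision of $I$ driven by the porosity holes, and to close the resulting induction using the concavity of $t\mapsto t^\delta$. I first record what porosity means in one dimension: given an interval $J$ of length $R\in[\rho_0,\rho_1]$, porosity (on balls, which here equals porosity on lines) produces $\mathbf x\in J$ with $Y\cap B(\mathbf x,\nu R)=\emptyset$. In $\mathbb R$ the ball $B(\mathbf x,\nu R)$ is an open interval of length $2\nu R$ centered at $\mathbf x$, and since $\mathbf x\in J$ and $\nu<\tfrac1{10}$, its intersection with $J$ has length at least $\nu R$. Removing this hole leaves $J\setminus B(\mathbf x,\nu R)$ equal to at most two intervals $J_1,J_2$ with $|J_1|+|J_2|\leq(1-\nu)R$, and $Y\cap J\subset J_1\cup J_2$ since the hole avoids $Y$.

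I would then iterate this splitting: starting from $I$, subdivide every current interval of length $\geq\rho_0$ as above, and stop subdividing an interval once its length drops below $\rho_0$. Because each child has length at most $(1-\nu)$ times its parent's, the lengths along any branch satisfy $\ell_i\leq(1-\nu)^i|I|$; hence every branch reaches length $<\rho_0$ after finitely many steps and the recursion tree is finite. Moreover every internal node has length in $[\rho_0,\rho_1]$, since lengths only decrease starting from $|I|\leq\rho_1$, so porosity legitimately applies at each split.

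Next I would prove the bound $|Y\cap J|\leq|J|^\delta\rho_0^{1-\delta}$ by induction on the depth of the subtree rooted at $J$, where $\delta:=\frac{\log 2}{\log 2-\log(1-\nu)}\in(0,1)$ depends only on $\nu$. At a leaf one has $|J|<\rho_0$, so $|Y\cap J|\leq|J|=|J|^\delta|J|^{1-\delta}\leq|J|^\delta\rho_0^{1-\delta}$. At an internal node with children $J_1,J_2$, the inductive hypothesis together with the concavity estimate $|J_1|^\delta+|J_2|^\delta\leq 2^{1-\delta}(|J_1|+|J_2|)^\delta$ (Jensen's inequality for $t\mapsto t^\delta$) gives $|Y\cap J|\leq 2^{1-\delta}\big((1-\nu)|J|\big)^\delta\rho_0^{1-\delta}$. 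The choice of $\delta$ is made precisely so that $2^{1-\delta}(1-\nu)^\delta=1$, so the bound propagates up the tree; taking $J=I$ yields the lemma.

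The \textbf{main obstacle} is the bookkeeping of the recursion — verifying that it terminates and that porosity may be applied at every internal node — together with pinning down the exponent so that the constant $2^{1-\delta}(1-\nu)^\delta$ equals $1$. Everything else reduces to the elementary porosity-hole computation of Step one and the concavity inequality.
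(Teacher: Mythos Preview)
Your argument is correct and is essentially the same as the paper's: both split off a porosity hole of length $\geq\nu|I|$, apply the bound inductively to the two remaining subintervals, and close the recursion via the concavity estimate $|I_1|^\delta+|I_2|^\delta\leq 2^{1-\delta}(1-\nu)^\delta|I|^\delta$ with the same choice $\delta=\frac{\log 2}{\log 2-\log(1-\nu)}$. The only cosmetic difference is that you organize the induction on the depth of an explicit recursion tree, whereas the paper phrases it as induction on scale (assume the bound for intervals of length $\leq(1-\nu)R$ and deduce it for length $R$).
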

\begin{proof}
We show that~\eqref{e:porosity-count-1D} holds with
\begin{equation}
  \label{e:delta-def}
\delta:={\log 2\over\log 2-\log(1-\nu)}.
\end{equation}
We use induction on scales.
First of all, if $|I|\leq \rho_0$ then~\eqref{e:porosity-count-1D} holds since
$$
|Y\cap I|\leq |I|\leq |I|^\delta\rho_0^{1-\delta}.
$$
Now it is enough to show that if $|I|=R\in[\rho_0,\rho_1]$, and~\eqref{e:porosity-count-1D}
holds for all intervals of length $\leq (1-\nu)R$, then it holds for $I$.

Since $Y$ is $\nu$-porous on scales $\rho_0$ to $\rho_1$, there exists
a subinterval $J\subset I$ such that $|J|=\nu|I|$ and $Y\cap J=\emptyset$.
We write $I$ as the union of three nonoverlapping intervals
$$
I=I_1\cup J\cup I_2,\quad
|I_1|+|I_2|=(1-\nu)|I|
$$
and estimate
$$
\begin{aligned}
|Y\cap I|&\leq |Y\cap I_1|+|Y\cap I_2|\\
&\leq \big(|I_1|^\delta+|I_2|^\delta\big)\rho_0^{1-\delta}\\
&\leq 2^{1-\delta}(1-\nu)^\delta|I|^\delta\rho_0^{1-\delta}\\
&\leq |I|^\delta\rho_0^{1-\delta}
\end{aligned}
$$
which gives~\eqref{e:porosity-count-1D} for the interval~$I$.
Here in the first inequality we use that $Y\cap J=\emptyset$,
in the second inequality we use~\eqref{e:porosity-count-1D} for the intervals $I_1,I_2$,
in the third inequality we use the elementary statement
$$
s^\delta+(1-s)^\delta\leq 2^{1-\delta}\quad\text{for all }s\in [0,1]
$$
with $s:=|I_1|/((1-\nu)|I|)$, and in the last inequality we use the definition of~$\delta$.
\end{proof}
The next lemma shows that small neighborhoods of line porous sets are line porous:
\begin{lemm}
\label{l:porous-nbhd}
Assume that $\mathbf Y\subset\mathbb R^d$ is $\nu$-porous on lines on scales~$\rho_0$ to~$\rho_1$
and $0\leq\rho<\nu\rho_0$. Then the $\rho$-neighborhood
$\mathbf Y+B(0,\rho)$ is $\nu'$-porous on lines on scales~$\rho_0$ to~$\rho_1$,
where $\nu':=\nu-\rho/\rho_0$.
\end{lemm}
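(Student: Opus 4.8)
The plan is to verify the porosity-on-lines condition for the neighborhood $\mathbf Y+B(0,\rho)$ directly from Definition~\ref{d:porous}, reusing the very same porosity points that are supplied by $\mathbf Y$. Fix an arbitrary line segment $\tau$ of length $R\in[\rho_0,\rho_1]$. Since $\mathbf Y$ is $\nu$-porous on lines on scales $\rho_0$ to $\rho_1$, there exists a point $\mathbf x\in\tau$ with $\mathbf Y\cap B(\mathbf x,\nu R)=\emptyset$. I claim that this same point $\mathbf x$ witnesses $\nu'$-porosity of the neighborhood, that is,
\[
(\mathbf Y+B(0,\rho))\cap B(\mathbf x,\nu'R)=\emptyset.
\]

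To prove the claim I would argue by contradiction using the triangle inequality. Suppose some $\mathbf z$ lies in the intersection. Writing $\mathbf z=\mathbf y+\mathbf w$ with $\mathbf y\in\mathbf Y$ and $|\mathbf w|<\rho$, and using $|\mathbf z-\mathbf x|<\nu'R$, we obtain
\[
|\mathbf y-\mathbf x|\le |\mathbf w|+|\mathbf z-\mathbf x|<\rho+\nu'R.
\]
The heart of the matter is then the scale bookkeeping: substituting $\nu'=\nu-\rho/\rho_0$ gives
\[
\rho+\nu'R=\nu R+\rho\Big(1-\frac{R}{\rho_0}\Big)\le \nu R,
\]
where the inequality uses $R\ge\rho_0$. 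Hence $\mathbf y\in B(\mathbf x,\nu R)$, contradicting $\mathbf Y\cap B(\mathbf x,\nu R)=\emptyset$ and establishing the claim.

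Finally, I would record that the hypothesis $\rho<\nu\rho_0$ guarantees $\nu'=\nu-\rho/\rho_0\in(0,\nu)\subset(0,\tfrac1{10})$, so that $\nu'$ is an admissible porosity constant and the conclusion is a genuine instance of Definition~\ref{d:porous}. The argument has no real obstacle beyond this bookkeeping; the only point requiring care is that one must invoke the \emph{lower} scale bound $R\ge\rho_0$, rather than the segment's own length, when passing from $\rho/R$ to $\rho/\rho_0$. This is precisely why the loss in the porosity constant is governed by $\rho/\rho_0$ and is uniform over all admissible scales $R$.
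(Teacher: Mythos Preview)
Your proof is correct and follows essentially the same approach as the paper: pick the porosity point $\mathbf x$ for $\mathbf Y$ on the segment $\tau$, and use the inequality $\nu'R+\rho\le\nu R$ (which follows from $R\ge\rho_0$) together with the triangle inequality to conclude that the same $\mathbf x$ works for $\mathbf Y+B(0,\rho)$. The paper's version is simply a terser rendition of your argument.
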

\begin{proof}
Let $\tau\subset\mathbb R^d$ be a line segment of length $R\in [\rho_0,\rho_1]$.
From line porosity of $\mathbf Y$ we see that there exists $\mathbf x\in\tau$
such that $\mathbf Y\cap B(\mathbf x,\nu R)=\emptyset$. We have
$\nu' R+\rho \leq \nu R$, thus
$(\mathbf Y+B(0,\rho))\cap B(\mathbf x,\nu' R)=\emptyset$,
showing line porosity of $\mathbf Y+B(0,\rho)$. 
\end{proof}

Combining the above two lemmas, we get the following counting statement, used in~\S\ref{s:weight-constr} below:
\begin{lemm}
\label{l:porosity-count}
Assume that $\mathbf Y\subset\mathbb R^2$ is $\nu$-porous on lines on scales
$\rho_0$ to $\rho_1$, and let $\rho_0\leq r_0\leq r_1\leq \rho_1$.
Let $\tau\subset\mathbb R^2$ be a line segment of length $r_1$.
Let $S\subset\mathbf Y$ be an $r_0$-separated set, that is $|\mathbf x-\mathbf x'|> r_0$
for all $\mathbf x,\mathbf x'\in S$, $\mathbf x\neq\mathbf x'$. Then
\begin{equation}
  \label{e:porosity-count}
\#\big(S\cap (\tau+B(0,2r_0))\big)\leq C\Big({r_1\over r_0}\Big)^{\delta},
\end{equation}
for some constants $C>0$, $\delta<1$ depending only on $\nu$. See Figure~\ref{f:porosity-count}.
\end{lemm}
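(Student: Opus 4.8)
The plan is to reduce the two-dimensional count to the one-dimensional \emph{measure} bound of Lemma~\ref{l:porosity-count-1D} by slicing a slightly thickened copy of $\mathbf Y$ along the direction of $\tau$. Choose coordinates so that the line containing $\tau$ is the $x_1$-axis; then every $\mathbf x\in S\cap(\tau+B(0,2r_0))$ satisfies $|x_2|\leq 2r_0$, and the ball $B(\mathbf x,r_0/2)$ lies in the slab $\mathcal T:=I_0\times[-3r_0,3r_0]$, where $I_0\subset\mathbb R$ is an interval of length $|I_0|\leq r_1+5r_0\leq 6r_1$ containing the $x_1$-coordinates of all these points together with their $r_0/2$-neighborhoods. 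Since $S$ is $r_0$-separated, the balls $B(\mathbf x,r_0/2)$ with $\mathbf x\in S\cap(\tau+B(0,2r_0))$ are pairwise disjoint, and each is contained in $\mathcal T\cap\mathbf Y_+$, where $\mathbf Y_+:=\mathbf Y+B(0,r_0/2)$. Comparing areas gives
$$
\#\big(S\cap(\tau+B(0,2r_0))\big)\cdot\tfrac{\pi}{4}r_0^2\ \leq\ \big|\mathcal T\cap\mathbf Y_+\big|,
$$
so it suffices to bound the right-hand side by $Cr_1^\delta r_0^{2-\delta}$.

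For this I would estimate $|\mathcal T\cap\mathbf Y_+|$ by Fubini in the $x_2$-variable. For fixed $x_2$ consider the horizontal slice $A_{x_2}:=\{x_1\in\mathbb R:(x_1,x_2)\in\mathbf Y_+\}$. The key point is that each $A_{x_2}$ is porous on the line. Indeed, given a segment $\tau'$ of length $R$ on the line $\{x_2=\mathrm{const}\}$ with $R\in[\rho_0,\rho_1]$, line porosity of $\mathbf Y$ produces $\mathbf x_0\in\tau'$ with $\mathbf Y\cap B(\mathbf x_0,\nu R)=\emptyset$; then for any $x_1$ with $|x_1-(\mathbf x_0)_1|<\nu R-r_0/2$ the triangle inequality forces $(x_1,x_2)\notin\mathbf Y_+$, so $A_{x_2}$ has a gap around $(\mathbf x_0)_1$, and once $R\geq r_0/\nu$ this gap has relative length $\geq\nu/2$ inside $\tau'$. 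Hence $A_{x_2}$ is $(\nu/2)$-porous on scales $r_0/\nu$ to $\rho_1$, and Lemma~\ref{l:porosity-count-1D}, applied after covering $I_0$ by a bounded number of intervals of length $\leq\rho_1$, gives $|A_{x_2}\cap I_0|\leq Cr_1^\delta r_0^{1-\delta}$ with $\delta\in(0,1)$ depending only on $\nu$. Integrating over the range $x_2\in[-3r_0,3r_0]$ of length $6r_0$ yields $|\mathcal T\cap\mathbf Y_+|\leq Cr_1^\delta r_0^{2-\delta}$, which together with the area comparison gives~\eqref{e:porosity-count}. The degenerate case $r_0/\nu>\rho_1$, in which the admissible scale range is empty, is handled separately: there $r_1/r_0<1/\nu$ is bounded, the disjoint balls all lie in a slab of area $O(r_0^2/\nu)$, so the count is $O(1)$ and~\eqref{e:porosity-count} holds trivially since $(r_1/r_0)^\delta\geq1$.

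The step I expect to be the main obstacle is the slice-porosity claim, i.e.\ that the horizontal slices $A_{x_2}$ of the thickened set $\mathbf Y_+$ are one-dimensionally porous with a constant comparable to $\nu$. This is exactly where porosity of $\mathbf Y$ \emph{on lines}, rather than merely on balls, is essential: a ball of radius $\nu R$ missing $\mathbf Y$ and centered on the slice line forces a genuine gap in that slice, whereas ball porosity alone would permit $\mathbf Y$ to meet a given slice in a full interval (as for a line $\subset\mathbb R^2$). The remaining bookkeeping—tracking how the thickening by $r_0/2$ degrades the porosity constant from $\nu$ to $\nu/2$ and restricts the scales to $[r_0/\nu,\rho_1]$, and covering $I_0$ by boundedly many intervals—is routine and affects only the constants $C$ and $\delta$, which remain functions of $\nu$ alone.
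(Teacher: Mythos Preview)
Your proof is correct and uses the same two ingredients as the paper: the 1D measure bound (Lemma~\ref{l:porosity-count-1D}) and the fact that a small thickening of $\mathbf Y$ remains line-porous (your slice-porosity claim is exactly Lemma~\ref{l:porous-nbhd} reproved inline). The packaging differs slightly: the paper works entirely on the single line $\tau$, setting $\widetilde Y:=(\mathbf Y+B(0,3r_0))\cap\tau$, applying the 1D bound once to get $|\widetilde Y|\leq C r_1^\delta r_0^{1-\delta}$, and then observing that each $\mathbf x\in\widetilde S$ contributes an interval $I_{\mathbf x}=B(\mathbf x,3r_0)\cap\tau\subset\widetilde Y$ of length $\geq r_0$, with at most a bounded number of overlaps. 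You instead compare two-dimensional areas via disjoint $r_0/2$-balls inside a slab and then Fubini over all horizontal slices. Your route is a touch more wasteful---you integrate the 1D bound over an $r_0$-thick family of slices when a single slice already carries the full count---but the loss is only in constants, and the argument is equally valid.
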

\begin{figure}
\includegraphics{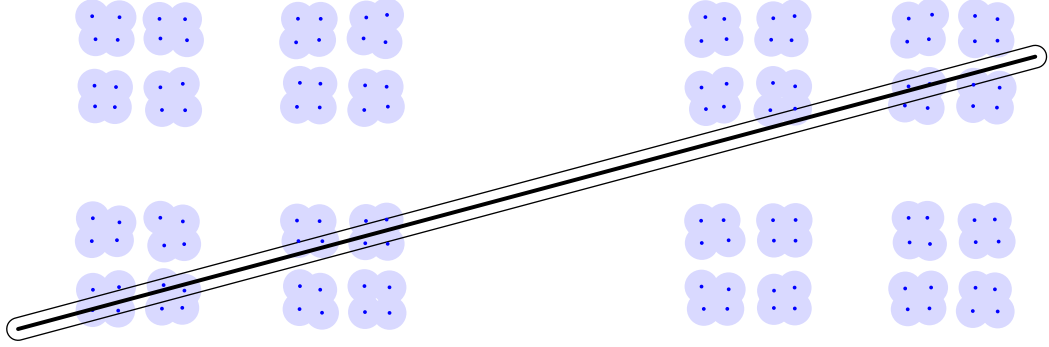}
\caption{An illustration of Lemma~\ref{l:porosity-count} for $S=\mathbf Y$. The dots
are the elements of the $r_0$-separated set $S$.
The thick line is the segment $\tau$, which has length~$r_1$.
The oval shape is the neighborhood $\tau+B(0,2r_0)$, and the blue shape is the neighborhood
$\mathbf Y+B(0,3r_0)$.}
\label{f:porosity-count}
\end{figure}
\begin{proof}
1. Consider the set
$$
\widetilde Y:= (\mathbf Y+B(0,3r_0))\cap\tau.
$$
By Lemma~\ref{l:porous-nbhd}, the set $\widetilde Y$ is $\nu\over 2$-porous
on lines on scales $\widetilde r_0$ to $\rho_1$, where
$$
\widetilde r_0:={6\over\nu}r_0.
$$
Taking a unit speed parametrization of the line segment $\tau$, we can consider
$\widetilde Y$ as a subset of $[0,r_1]\subset\mathbb R$.
This set is $\nu\over 2$-porous on scales $\widetilde r_0$ to~$\rho_1$.
Thus by Lemma~\ref{l:porosity-count-1D} with $I:=[0,r_1]$, we have, with
$\delta$ given by~\eqref{e:delta-def} with $\nu$ replaced by $\nu\over 2$, and $|\bullet|$ denoting the one-dimensional Lebesgue
measure on~$\tau$,
\begin{equation}
  \label{e:porosity-count-2d-1}
|\widetilde Y|\leq r_1^\delta {\widetilde r_0}^{\,1-\delta}.
\end{equation}
(Strictly speaking the above argument does not apply when $\widetilde r_0>\rho_1$,
but in the latter case we have $\widetilde r_0> r_1$
and~\eqref{e:porosity-count-2d-1} is immediate since $|\widetilde Y|\leq |\tau|=r_1$.)

\noindent 2. Denote $\widetilde S:=S\cap (\tau+B(0,2r_0))$.
For each $\mathbf x\in\widetilde S$, the set
$$
I_{\mathbf x}:=B(\mathbf x,3r_0)\cap\tau
$$
is a line segment of length $\geq r_0$ inside~$\tau$. Since
$\widetilde S\subset S\subset \mathbf Y$, we have $I_{\mathbf x}\subset \widetilde Y$.
Moreover, for any given $\mathbf y\in\tau$, if $\mathbf y\in I_{\mathbf x}$ then $\mathbf x\in B(\mathbf y,3r_0)$. 
Since $\widetilde S$ is an $r_0$-separated set,
the ball $B(\mathbf y,3r_0)$ contains at most $100$ points
in the set $\widetilde S$ (since the $r_0\over 2$-balls centered at these points
are nonintersecting and contained in the ball $B(\mathbf y,4r_0)$). It follows that each point $\mathbf y\in\tau$ is contained
in at most 100 intervals $I_{\mathbf x}$. We now estimate
$$
r_0\#(\widetilde S)\leq \sum_{\mathbf x\in \widetilde S}|I_{\mathbf x}|
\leq 100\bigg|\bigcup_{\mathbf x\in\widetilde S}I_{\mathbf x}\bigg|\leq 100|\widetilde Y|.
$$
Using~\eqref{e:porosity-count-2d-1}, we get 
$$
\#(\widetilde S)\leq {600\over\nu}\Big({r_1\over r_0}\Big)^\delta,
$$
giving~\eqref{e:porosity-count}.
\end{proof}

\section{Construction of a weight adapted to a line porous set}
\label{s:weight-constr}

In this section we prove Theorem~\ref{t:existence-weight} following~\cite[\S6.1]{Cohen-FUP-v1}.

Let $\nu$ be the line porosity constant of the set~$\mathbf Y$
and let $\delta\in (0,1)$ be the constant in Lemma~\ref{l:porosity-count}.
Fix constants $\eta,\alpha$ such that
\begin{equation}
  \label{e:alpha-gamma-fix}
0<\eta<\tfrac 13,\quad
3\eta\leq \alpha<1,\quad
\alpha>1-\eta(1-\delta).
\end{equation}
To construct the weight $\omega_k$, take $k\geq 1$, put
$$
r_k:={2^k\over k^\eta}
$$
and let
$$
S_k\ \subset\ \mathbf Y\cap \{2^{k-1}\leq |\mathbf x|\leq 2^{k+1}\}
$$
be a maximal $r_k$-separated subset.

Fix cutoff functions
$$
\begin{gathered}
\chi,\psi\in\CIc(\mathbb R^2;[0,1]),\quad
\supp\chi\subset B(0,2),\quad
\chi=1\text{ on }B(0,1);\\
\supp\psi\subset \{\tfrac 12<|\mathbf x|<2\},\quad
\psi=1\text{ on }\{\tfrac 1{\sqrt 2}\leq |\mathbf x|\leq \sqrt 2\}
\end{gathered}
$$
and put
\begin{equation}
  \label{e:omega-k-def}
\omega_k(\mathbf x):=-10\psi(2^{-k}\mathbf x)\sum_{\mathbf y\in S_k}{2^k\over k^\alpha}\,\chi\Big(
{\mathbf x-\mathbf y\over r_k}\Big).
\end{equation}
We show that $\omega_k$ satisfy all the conclusions of Theorem~\ref{t:existence-weight}.
Below $K$ denotes a global constant whose value may change from place to place.
\begin{itemize}
\item \eqref{e:omega-k-ass-1} holds since if $2^{k-1}>h^{-1}$, then
$S_k=\emptyset$ and thus $\omega_k=0$.
\item \eqref{e:omega-k-ass-2} follows immediately from the definition of~$\omega_k$.
\item To show the regularity property~\eqref{e:omega-k-ass-3},
we estimate for $|\gamma|\leq 3$ and all $\mathbf x\in\mathbb R^2$
$$
|\partial^\gamma_{\mathbf x}\omega_k(\mathbf x)|
\leq K {2^k\over k^\alpha}r_k^{-|\gamma|}\#(S_k\cap B(\mathbf x,2r_k))
\leq K2^{(1-|\gamma|)k}k^{|\gamma|\eta-\alpha}
$$
where in the last inequality we use that $S_k$ is an $r_k$-separated set
and thus the ball $B(\mathbf x,2r_k)$ contains no more than 100 points in $S_k$.
Since $|\gamma|\eta-\alpha\leq 3\eta-\alpha \leq 0$, we see
that $\|\omega_k\|_{S^1_3}\leq K$, giving~\eqref{e:omega-k-ass-3}.
\item To show the growth property~\eqref{e:omega-k-ass-4},
assume for now that $k$ satisfies $2^{k+2}\leq h^{-1}$.
For
$\theta\in \mathbb S^1$ consider the line segment
$$
\tau_{k,\theta}:=\{(t\cos\theta,t\sin\theta)\mid -2^{k+1}\leq t\leq 2^{k+1}\}\ \subset\ \mathbb R^2.
$$
For each $\mathbf y\in S_k$, if the support of $\chi\big({\mathbf x-\mathbf y\over r_k}\big)$ intersects $\tau_{k,\theta}$, then $\mathbf y$ lies in the set
$$
\widetilde S_{k,\theta}:=S_k\cap (\tau_{k,\theta}+B(0,2r_k)).
$$
By Lemma~\ref{l:porosity-count} with $\rho_0:=1$, $\rho_1:=h^{-1}$,
$r_0:=r_k\geq 1$, $r_1:=2^{k+2}$,
and the fact that $\mathbf Y$ is porous on lines, we have
$$
\#(\widetilde S_{k,\theta})\leq Ck^{\delta\eta}
$$
where the constant $C$ depends only on~$\nu$.
We now estimate (where $ds$ stands for arc length element)
$$
\begin{aligned}
-T(|\mathbf x|^{-2}\omega_k)(0,\theta)&\leq 
-2^{2-2k}\int_{\tau_{k,\theta}}\omega_k(\mathbf x)\,ds(\mathbf x)\\
&\leq K\cdot 2^{-2k}\cdot\#(\widetilde S_{k,\theta})\cdot {2^k\over k^\alpha}\cdot  r_k\\
&\leq KCk^{(\delta-1)\eta-\alpha}.
\end{aligned}
$$
This implies that, with $q_k:=-\inf_{\theta\in\mathbb S^1}T(|\mathbf x|^{-2}\omega_k)(0,\theta)$, we have
\begin{equation}
  \label{e:q-k-esti}
q_k\leq KCk^{(\delta-1)\eta-\alpha}\quad\text{when }
2^{k+2}\leq h^{-1}.
\end{equation}
By the proof of~\eqref{e:omega-k-ass-1} above, we know that
$q_k=0$ when $2^{k-1}>h^{-1}$. Thus~\eqref{e:q-k-esti}
is satisfied for all but 3 values of~$k$. Since each $q_k$
is bounded by~\eqref{e:q-k-apriori} and $(\delta-1)\eta-\alpha<-1$, we get
$$
\sum_{k=1}^\infty q_k\leq KC\sum_{k=1}^\infty k^{(\delta-1)\eta-\alpha}\leq KC,
$$
which gives~\eqref{e:omega-k-ass-4}.
\item It remains to show the property~\eqref{e:omega-large-Y}.
Take arbitrary $\mathbf x\in \mathbf Y\setminus B(0,10)$.
Choose $k\geq 1$ such that
\begin{equation}
\label{e:x-k-rest}
2^{k-\frac12}\leq |\mathbf x|\leq 2^{k+\frac12}.
\end{equation}
Then $\psi(2^{-k}\mathbf x)=1$. Since $S_k$ is a maximal $r_k$-separated
subset of $\mathbf Y\cap \{2^{k-1}\leq |\mathbf x|\leq 2^{k+1}\}$,
there exists $\mathbf y\in S_k$ such that $|\mathbf x-\mathbf y|\leq r_k$.
We have $\chi\big({\mathbf x-\mathbf y\over r_k}\big)=1$, thus from the definition~\eqref{e:omega-k-def} of~$\omega_k$ we see that
$$
\omega(\mathbf x)\leq \omega_k(\mathbf x)\leq -10{2^k\over k^\alpha}\leq -{|\mathbf x|\over(\log|\mathbf x|)^{\alpha}}
$$
where in the last inequality we use~\eqref{e:x-k-rest}.
This finishes the proof of~\eqref{e:omega-large-Y}.
\end{itemize}

\section{Plurisubharmonicity of the extended weight}
\label{s:psh-weight}

This section (following~\cite[\S3]{Cohen-FUP-v1})
gives a criterion for plurisubharmonicity of functions of the form~\eqref{e:u-intro-form}
in terms of the X-ray transform. This is the first ingredient in the proof of
Theorem~\ref{t:psh-constr}.
\begin{prop}
\label{l:psh-criterion}
Assume that $\omega\in\CIc(\mathbb R^2;\mathbb R)$ and $\sigma\in\mathbb R$. Define the function
$u\in C^0(\mathbb C^2;\mathbb R)$ by
\begin{equation}
  \label{e:psh-u-def}
u(\mathbf x+i\mathbf y)=E\omega(\mathbf x+i\mathbf y)+\sigma |\mathbf y|,\quad
\mathbf x,\mathbf y\in\mathbb R^2.
\end{equation}
Then $u$ is plurisubharmonic on~$\mathbb C^2$ if and only if
\begin{equation}
  \label{e:psh-criterion}
T(\Delta\omega)(s,\theta)+\pi\sigma\geq 0\quad\text{for all }
s\in\mathbb R,\ \theta\in\mathbb S^1.
\end{equation}
\end{prop}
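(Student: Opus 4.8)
The plan is to use Definition~\ref{d:psh}: since $u$ is continuous, it is plurisubharmonic if and only if its restriction to every complex line $\zeta\mapsto \mathbf z_0+\zeta\mathbf v$ (with $\mathbf z_0=\mathbf x_0+i\mathbf y_0$, $\mathbf v=\mathbf a+i\mathbf b$, $\mathbf a,\mathbf b\in\mathbb R^2$, and $\zeta=\xi+i\eta$) is subharmonic in~$\zeta$. Writing $\mathbf y(\zeta)=\mathbf y_0+\xi\mathbf b+\eta\mathbf a$, this restriction is real-analytic away from the locus $Z:=\{\zeta:\mathbf y(\zeta)=0\}$, so the whole point is to compute its Laplacian $\Delta_\zeta$ off~$Z$ and then control the distributional contribution on~$Z$.

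First I would compute the complex Hessian of $E\omega$ away from~$\mathbb R^2$. Differentiating~\eqref{e:E-def} under the integral sign and using $(1-it)/(1+t^2)=1/(1+it)$, one finds the clean identity $\partial_{\bar z_j}\partial_{z_k}E\omega(\mathbf x+i\mathbf y)=\tfrac1{4\pi}\int_{\mathbb R}(\partial_{x_j}\partial_{x_k}\omega)(\mathbf x+t\mathbf y)\,dt$ for $\mathbf y\neq 0$. Contracting against~$\mathbf v$ and using that the real Hessian is symmetric turns $\langle\partial_{\bar{\mathbf z}\mathbf z}E\omega\,\mathbf v,\mathbf v\rangle$ into $\tfrac1{4\pi}\int_{\mathbb R}\big[(\mathbf a\cdot\nabla)^2\omega+(\mathbf b\cdot\nabla)^2\omega\big](\mathbf x+t\mathbf y)\,dt$; rescaling~$t$ to arclength and applying Lemma~\ref{l:XR-prop-1} collapses each directional second derivative to $\sin^2$ times $T(\Delta\omega)$, yielding $\tfrac1{4\pi|\mathbf y|}(|\mathbf a_\perp|^2+|\mathbf b_\perp|^2)\,T(\Delta\omega)(s,\theta)$, where $\mathbf a_\perp,\mathbf b_\perp$ are the components perpendicular to~$\mathbf y$ and $(s,\theta)$ describe the line through~$\mathbf x$ in direction~$\mathbf y$. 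A parallel, easier computation gives $\langle\partial_{\bar{\mathbf z}\mathbf z}|\mathbf y|\,\mathbf v,\mathbf v\rangle=\tfrac1{4|\mathbf y|}(|\mathbf a_\perp|^2+|\mathbf b_\perp|^2)$, so off~$Z$ the restriction's Laplacian equals $\tfrac{|\mathbf a_\perp|^2+|\mathbf b_\perp|^2}{\pi|\mathbf y|}\big(T(\Delta\omega)(s,\theta)+\pi\sigma\big)$. As $(\mathbf x,\mathbf y)$ varies, $(s,\theta)$ ranges over all of $\mathbb R\times\mathbb S^1$ and the prefactor can be made positive, so this smooth part is nonnegative for all~$\mathbf v$ exactly when~\eqref{e:psh-criterion} holds; in particular, if~\eqref{e:psh-criterion} fails the Hessian is negative on an open subset of $\mathbb C^2\setminus\mathbb R^2$, which by Proposition~\ref{l:psh-crit} already rules out plurisubharmonicity, giving the ``only if'' direction.

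For the ``if'' direction I must still check that no negative singular mass is created on~$Z$. When $\mathbf a,\mathbf b$ are linearly independent, $Z$ is a single point; since $u\circ\ell$ is continuous and subharmonic off this point, the point is a removable singularity (points are polar in~$\mathbb R^2$) and $u\circ\ell$ is subharmonic. The remaining case is $\mathbf v=\lambda\mathbf e$ with $\mathbf e\in\mathbb R^2$ a unit vector and $\lambda\in\mathbb C$; by conformal invariance of subharmonicity I may take $\mathbf v=\mathbf e$, and then $Z$ is a line only when $\mathbf y_0\in\mathbb R\mathbf e$, say $\mathbf y_0=-\eta_*\mathbf e$. Here the key structural fact is that $E\omega$ restricted to a real-direction complex line is exactly a one-dimensional Poisson extension: substituting into~\eqref{e:E-def} gives $u\big(\mathbf z_0+\zeta\mathbf e\big)=Pg(\xi,|\eta-\eta_*|)+\sigma|\eta-\eta_*|$, where $g(u)=\omega(\mathbf x_0+u\mathbf e)$ and $P$ denotes the $1$D harmonic extension. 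Both terms are harmonic off~$Z$ (consistent with $\mathbf a_\perp=0$ there), so $\Delta_\zeta(u\circ\ell)$ reduces to the jump of the normal derivative across~$Z$, namely $(-2\mathcal H g'(\xi)+2\sigma)\,\delta(\eta-\eta_*)$.

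The crux, and the step I expect to be the main obstacle, is showing this jump is nonnegative under the single hypothesis~\eqref{e:psh-criterion}. I would apply Lemma~\ref{l:XR-Hilbert} (translated and rotated, via~\eqref{e:XR-prop-rotate}, to the point $\mathbf x_0+\xi\mathbf e$ and direction~$\mathbf e$) to write $\mathcal H g'(\xi)=-\tfrac1{4\pi}\int_{\mathbb S^1}|\sin(\theta-\theta_{\mathbf e})|\,T(\Delta\omega)(s(\xi,\theta),\theta)\,d\theta$, where all lines pass through $\mathbf x_0+\xi\mathbf e$. Bounding $T(\Delta\omega)\geq-\pi\sigma$ pointwise and using $\int_{\mathbb S^1}|\sin\phi|\,d\phi=4$ yields precisely $\mathcal H g'(\xi)\leq\sigma$, so the jump is $\geq0$ and $u\circ\ell$ is subharmonic. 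The delicate points are getting the orientation and sign of the jump right (it is governed by $\partial_{\eta}Pg(\cdot,0^+)=-\mathcal H g'$ through the Cauchy--Riemann equations) and checking that the constants $\tfrac1{4\pi}$ and~$4$ conspire so that the a priori weaker averaged bound from the Hilbert transform follows from the pointwise condition~\eqref{e:psh-criterion}; it is reassuring that the same quantity $T(\Delta\omega)+\pi\sigma$ controls both the smooth interior part and the boundary jump, so no stronger hypothesis is needed.
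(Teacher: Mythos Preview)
Your argument is correct, but it follows a genuinely different route from the paper's~--- in fact it is essentially Cohen's original approach, which this note was written precisely to streamline (see Remark~\ref{r:Hilbert}).

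The paper proceeds purely distributionally. Using the auxiliary distributions $Z_t$ and Lemmas~\ref{l:Z-t-conv-L1}--\ref{l:E-ddbar}, it shows that the matrix $\partial_{\bar{\mathbf z}\mathbf z}E\omega$ lies in $L^1_{\loc}(\mathbb C^2)$ with \emph{no singular part} across $\mathbb R^2$, given globally by the formula~\eqref{e:E-ddbar}. Together with the $L^1_{\loc}$ formula~\eqref{e:y-abs-der} for $\partial_{\bar{\mathbf z}\mathbf z}|\mathbf y|$, this yields a pointwise $L^1_{\loc}$ expression for $\partial_{\bar{\mathbf z}\mathbf z}u$; Proposition~\ref{l:psh-crit} then reduces plurisubharmonicity to nonnegativity of a real symmetric matrix, which after testing against real unit vectors $\mathbf v_\gamma=(\cos\gamma,\sin\gamma)$ and invoking Lemma~\ref{l:XR-prop-1} becomes exactly~\eqref{e:psh-criterion}. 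No case analysis on~$\mathbf v$, no separate treatment of the locus $\mathbf y=0$, and no Hilbert transform.

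Your approach instead restricts to complex lines and confronts the singular set $Z$ head-on. For generic $\mathbf v$ you dispose of $Z$ by a removable-singularity argument; for real directions $\mathbf v\in\mathbb R^2$ with $\mathbf y_0\parallel\mathbf v$ you recognize the one-dimensional Poisson extension, compute the jump of the normal derivative across $Z$, and arrive at Cohen's extra condition $\mathcal H g'\leq\sigma$~--- this is exactly~\eqref{e:cohen-extra}. You then derive that bound from~\eqref{e:psh-criterion} via Lemma~\ref{l:XR-Hilbert}, which the paper states but uses only in Remark~\ref{r:Hilbert} to explain why Cohen's Hilbert condition is not an independent hypothesis. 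In other words, the lemma you rely on centrally is precisely the one the paper's proof was designed to bypass.

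Both arguments are valid. The paper's buys brevity and uniformity: once one knows the complex Hessian is $L^1_{\loc}$, the equivalence with~\eqref{e:psh-criterion} is a two-line computation. Yours buys concreteness~--- one sees explicitly why the real-direction complex lines are the dangerous ones and how the Poisson/Hilbert structure governs them~--- at the cost of the case split and of needing the nontrivial identity~\eqref{e:XR-Hilbert}.
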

\begin{rema}
\label{r:Hilbert}
In~\cite[(3.5)]{Cohen-FUP-v1} there is an extra condition needed for plurisubharmonicity,
in terms of the derivative of the Hilbert transform of restrictions of~$\omega$ to lines in~$\mathbb R^2$. More precisely, one requires that
for any $\mathbf z_0,\mathbf v_0\in\mathbb R^2$ with $|\mathbf v_0|=1$, if we define
$\omega_0(t):=\omega(\mathbf z_0+t\mathbf v_0)$, then
\begin{equation}
  \label{e:cohen-extra}
\mathcal H\omega_0'(0)\leq \sigma.
\end{equation}
This condition corresponds to subharmonicity of the restriction of~$u$ to the complex line
$\{\mathbf z_0+w\mathbf v_0\mid w\in\mathbb C\}$.
Our proof uses the distributional criterion for
plurisubharmonicity, Proposition~\ref{l:psh-crit}, and does not explicitly need the condition~\eqref{e:cohen-extra}.

However, the extra condition~\eqref{e:cohen-extra} can be derived from~\eqref{e:psh-criterion} as follows. Since \eqref{e:psh-criterion} is invariant under composition of $\omega$ with rotations and translations, we may assume that $\mathbf z_0=(0,0)$ and $\mathbf v_0=(1,0)$,
that is $\omega_0(t)=\omega(t,0)$. By Lemma~\ref{l:XR-Hilbert} we have
$$
\mathcal H\omega_0'(0)=-{1\over 4\pi}\int_{\mathbb S^1}|\sin\theta|\, T(\Delta\omega)(0,\theta)\,d\theta.
$$
Assume that $\omega$ satisfies~\eqref{e:psh-criterion}. Since $\int_{\mathbb S^1}|\sin\theta|\,d\theta=4$, we get the condition~\eqref{e:cohen-extra}.
\end{rema}

\subsection{An auxiliary distribution}

For each $t\in\mathbb R$, define the distribution
$$
Z_t(x_1+iy_1,x_2+iy_2):=\delta(x_1-ty_1)\delta(x_2-ty_2)\ \in\ \mathcal D'(\mathbb C^2).
$$
This family of distributions is integrable in~$t$, more precisely we have the following
\begin{lemm}
\label{l:Z-t-bdd}
If $\varphi\in\CIc(\mathbb C^2)$ and $\supp\varphi\subset B(0,R)$ for some $R\geq 0$, then
\begin{equation}
  \label{e:Z-t-bdd}
|(Z_t,\varphi)|\leq \pi R^2\|\varphi\|_{C^0}\,\langle t\rangle^{-2}.
\end{equation}
\end{lemm}
\begin{proof}
We have
\begin{equation}
\label{e:Z-t-pair}
(Z_t,\varphi)=\int_{\mathbb R^2}\varphi(ty_1+iy_1,ty_2+iy_2)\,dy_1dy_2.
\end{equation}
The expression under the integral is supported in the ball
$B(0,R/\langle t\rangle)$ and is bounded in absolute value by~$\|\varphi\|_{C^0}$,
which gives~\eqref{e:Z-t-bdd}.
\end{proof}
We also have the following identity featuring the matrix $\partial_{\bar{\mathbf z}\mathbf z}Z_t$: 
\begin{equation}
\label{e:Z-t-ders}
\partial_{\bar {\mathbf z}\mathbf z}Z_t={1+t^2\over 4}\,
\partial_{\mathbf x}^2Z_t,\qquad
\partial_{\mathbf x}^2:=\begin{pmatrix}
\partial_{x_1}^2  & \partial_{x_1x_2}  \\
\partial_{x_1x_2}  & \partial_{x_2}^2 
\end{pmatrix}.
\end{equation}

\subsection{The extension operator as a convolution}

We now write $E\omega$ as a distributional convolution. For
$\psi\in\CIc(\mathbb R^2)$, define the distribution
$$
(\psi\otimes\delta)(x_1+iy_1,x_2+iy_2):=\psi(x_1,x_2)\delta(y_1)\delta(y_2)\ \in\ \mathcal E'(\mathbb C^2).
$$
Then for each $t\in\mathbb R$ we have
\begin{equation}
  \label{e:Z-t-conv}
(Z_t*(\psi\otimes\delta))(\mathbf x+i\mathbf y)=\psi(\mathbf x-t\mathbf y).
\end{equation}
By Lemma~\ref{l:Z-t-bdd}, the pairing of~\eqref{e:Z-t-conv}
with any test function $\varphi\in\CIc(\mathbb C^2)$ is $\mathcal O(\langle t\rangle^{-2})$.
In fact, we have the following bound on this distribution in~$L^1_{\loc}(\mathbb C^2)$:
\begin{lemm}
  \label{l:Z-t-conv-L1}
Let $R>0$ and assume that $\supp\psi\subset B(0,R)$. Then we have
\begin{equation}
  \label{e:Z-t-conv-L1}
\|Z_t*(\psi\otimes\delta)\|_{L^1(B(0,R))}\leq 50R^4\|\psi\|_{C^0}\, \langle t\rangle^{-2}.
\end{equation}
\end{lemm}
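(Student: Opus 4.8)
The plan is to reduce everything to the explicit formula~\eqref{e:Z-t-conv}, which identifies $Z_t*(\psi\otimes\delta)$ with the continuous---hence locally integrable---function $(\mathbf x+i\mathbf y)\mapsto\psi(\mathbf x-t\mathbf y)$. Consequently the left-hand side of~\eqref{e:Z-t-conv-L1} is simply the integral
$$
I:=\int_{\{|\mathbf x|^2+|\mathbf y|^2\le R^2\}}|\psi(\mathbf x-t\mathbf y)|\,d\mathbf x\,d\mathbf y,
$$
and the entire task is to extract the factor $R^4$ together with the decay $\langle t\rangle^{-2}$ out of the support condition $\supp\psi\subset B(0,R)$.

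First I would change variables to $\mathbf u:=\mathbf x-t\mathbf y$ (keeping $\mathbf y$), whose Jacobian is $1$. Then $\psi(\mathbf u)$ vanishes unless $\mathbf u\in B(0,R)$, so $\mathbf u$ is confined to a set of area $\pi R^2$, while the ball constraint becomes $|\mathbf u+t\mathbf y|^2+|\mathbf y|^2\le R^2$. I would then integrate in $\mathbf y$ for each fixed $\mathbf u$.

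The crux---and the only real computation---is this $\mathbf y$-integral, which is where the decay is produced, exactly as in the proof of Lemma~\ref{l:Z-t-bdd}. Expanding and completing the square gives
$$
|\mathbf u+t\mathbf y|^2+|\mathbf y|^2=(1+t^2)\bigl|\mathbf y+\tfrac{t}{1+t^2}\mathbf u\bigr|^2+\tfrac{|\mathbf u|^2}{1+t^2},
$$
so the admissible $\mathbf y$ lie in a disk of radius at most $R/\sqrt{1+t^2}=R/\langle t\rangle$, of area at most $\pi R^2\langle t\rangle^{-2}$. Bounding $|\psi|\le\|\psi\|_{C^0}$ and carrying out the two integrations then yields
$$
I\le\|\psi\|_{C^0}\cdot\pi R^2\cdot\pi R^2\langle t\rangle^{-2}=\pi^2R^4\|\psi\|_{C^0}\langle t\rangle^{-2}\le 50R^4\|\psi\|_{C^0}\langle t\rangle^{-2},
$$
using $\pi^2<50$.

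I expect no genuine obstacle: once the convolution is recognized as the function $\psi(\mathbf x-t\mathbf y)$, the estimate is purely geometric, the $\langle t\rangle^{-2}$ arising because the slab on which this function is supported has $\mathbf y$-width shrinking like $1/\langle t\rangle$. An alternative to completing the square is to treat $|t|\le 1$ and $|t|\ge 1$ separately, using the trivial bound $I\le\pi^2R^4\|\psi\|_{C^0}$ (integrating the constant $\|\psi\|_{C^0}$ over $\{|\mathbf x|\le R\}\times\{|\mathbf y|\le R\}$) in the first range, and confining $\mathbf y$ to a ball of radius $R/|t|$ in the second; combining via $\min(1,t^{-2})\le 2\langle t\rangle^{-2}$ gives the same conclusion with the constant $2\pi^2<50$.
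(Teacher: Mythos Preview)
Your proof is correct and follows essentially the same approach as the paper: both start from the explicit formula~\eqref{e:Z-t-conv} and bound the measure of the region where the integrand is nonzero by confining $\mathbf y$ to a disk of radius $\sim R/\langle t\rangle$. The only cosmetic difference is that you change variables to $\mathbf u=\mathbf x-t\mathbf y$ and complete the square (yielding the sharper constant $\pi^2$), whereas the paper works directly in $(\mathbf x,\mathbf y)$ and uses $|t\mathbf y|\le|\mathbf x-t\mathbf y|+|\mathbf x|\le 2R$ together with $|\mathbf y|\le R$ to get $|\mathbf y|\le\sqrt{5}R/\langle t\rangle$.
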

\begin{proof}
By~\eqref{e:Z-t-conv}, the left-hand side of~\eqref{e:Z-t-conv-L1} is equal to
\begin{equation}
  \label{e:Z-t-conv-L1-int}
\int_{B(0,R)}|\psi(\mathbf x-t\mathbf y)|\,d\mathbf xd\mathbf y.
\end{equation}
If $(\mathbf x,\mathbf y)\in B(0,R)$ and $\psi(\mathbf x-t\mathbf y)\neq 0$,
then
$$
|\mathbf x|,|\mathbf y|,|\mathbf x-t\mathbf y|\leq R,
$$
which in turn implies that $|t\mathbf y|\leq 2R$ and thus
$|\mathbf y|\leq \sqrt 5 R\langle t\rangle^{-1}$. Thus we can bound the integral
in~\eqref{e:Z-t-conv-L1-int} by the integral over $\{|\mathbf x|\leq R,\ |\mathbf y|\leq \sqrt 5R\langle t\rangle^{-1}\}$, which is a set of volume $5\pi^2 R^4\langle t\rangle^{-2}\leq 50 R^4\langle t\rangle^{-2}$. This gives~\eqref{e:Z-t-conv-L1}.
\end{proof}
By Lemma~\ref{l:Z-t-conv-L1}, the function $Z_t*(\psi\otimes\delta)$ is integrable in~$t$
with values in $L^1_{\loc}(\mathbb C^2)$, which by Fubini's Theorem is equivalent
to $Z_t*(\psi\otimes\delta)$ being integrable on $\mathbb R_t\times B(0,R)_{(\mathbf x,\mathbf y)}$
for each~$R$. The next lemma computes the integral of $Z_t*(\psi\otimes\delta)$
in~$t$ in terms of the X-ray transform of~$\psi$:
\begin{lemm}
  \label{l:Z-t-integrated}
For any $\psi\in\CIc(\mathbb R^2)$, we have
\begin{equation}
\label{e:Z-t-integrated}
\int_{\mathbb R}(Z_t*(\psi\otimes\delta))(\mathbf x+i\mathbf y)\,dt={1\over r}\,T\psi(x_2\cos\theta-x_1\sin\theta,\theta)
\end{equation}
where we write $\mathbf x=(x_1,x_2)$, $\mathbf y=(r\cos\theta,r\sin\theta)$ with
$r>0$, $\theta\in\mathbb S^1$, and
the integral is understood in $L^1_{\loc}(\mathbb C^2)$.
\end{lemm}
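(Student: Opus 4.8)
The plan is to reduce the identity to a single change of variables, exploiting the fact that \eqref{e:Z-t-conv} turns the distributional convolution into an honest function. First I would fix $\mathbf y\neq 0$ and use \eqref{e:Z-t-conv} to rewrite the integrand as $\psi(\mathbf x-t\mathbf y)$, so that the left-hand side of \eqref{e:Z-t-integrated} becomes $\int_{\mathbb R}\psi(\mathbf x-t\mathbf y)\,dt$. Writing $\mathbf y=(r\cos\theta,r\sin\theta)$ with $r>0$, the map $t\mapsto \mathbf x-t\mathbf y$ traverses the line through $\mathbf x$ in the direction $(\cos\theta,\sin\theta)$ at speed $r$, so after the substitution $\tau=-tr$ the integral equals $\tfrac1r\int_{\mathbb R}\psi(x_1+\tau\cos\theta,x_2+\tau\sin\theta)\,d\tau$, the factor $\tfrac1r$ accounting for the reparametrization by arc length.

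The next step is to recognize this arc-length integral as a value of the X-ray transform. Comparing with the definition \eqref{e:X-ray-def}, the line $\{(x_1+\tau\cos\theta,x_2+\tau\sin\theta)\}$ coincides with the line $\{(t'\cos\theta-s\sin\theta,t'\sin\theta+s\cos\theta)\}$ appearing in $T\psi(s,\theta)$ as soon as the two share a point; solving the resulting linear system shows that $(x_1,x_2)$ lies on the latter line precisely when $s=x_2\cos\theta-x_1\sin\theta$ (with $t'=x_1\cos\theta+x_2\sin\theta$ giving the base point). Since both parametrizations move at unit speed in the same direction, they differ only by the shift $\tau=t'-t'_0$, and the arc-length integral equals $T\psi(x_2\cos\theta-x_1\sin\theta,\theta)$. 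This yields \eqref{e:Z-t-integrated} pointwise for $\mathbf y\neq 0$.

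Finally, to interpret the identity in $L^1_{\loc}(\mathbb C^2)$, I would appeal to Lemma~\ref{l:Z-t-conv-L1}, which shows that $Z_t*(\psi\otimes\delta)$ is integrable in $t$ with values in $L^1_{\loc}$; Fubini's theorem then justifies interchanging the $t$-integration with the pairing against test functions. The right-hand side is also locally integrable, since $\tfrac1r$ is integrable in $\mathbf y$ near the origin in $\mathbb R^2$ and $T\psi$ is bounded. The computation is essentially a single change of variables, so I do not expect a serious obstacle; the only point requiring care is matching the sign and offset conventions of the X-ray transform, i.e.\ verifying that the signed distance of the line through $\mathbf x$ in direction $\theta$ is exactly $x_2\cos\theta-x_1\sin\theta$.
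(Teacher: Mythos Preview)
Your proposal is correct and follows essentially the same route as the paper: reduce to a pointwise computation for $\mathbf y\neq 0$ via~\eqref{e:Z-t-conv}, then perform the change of variables (the paper combines your two substitutions $\tau=-tr$ and $\tau=t'-(x_1\cos\theta+x_2\sin\theta)$ into the single $\tilde t=-rt+x_1\cos\theta+x_2\sin\theta$) to land on the X-ray transform definition with $s=x_2\cos\theta-x_1\sin\theta$. Your added remarks on the $L^1_{\loc}$ interpretation via Lemma~\ref{l:Z-t-conv-L1} and Fubini are fine and slightly more explicit than what the paper writes.
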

\begin{proof}
It suffices to check that the formula~\eqref{e:Z-t-integrated} holds
pointwise for each $\mathbf x\in\mathbb R^2$, $\mathbf y\in\mathbb R^2\setminus \{0\}$.
Using~\eqref{e:Z-t-conv} we compute
$$
\begin{gathered}
\int_{\mathbb R}(Z_t*(\psi\otimes\delta))(\mathbf x+i\mathbf y)\,dt
=\int_{\mathbb R}\psi(x_1-rt\cos\theta,x_2-rt\sin\theta)\,dt\\
={1\over r}\int_{\mathbb R}\psi(\tilde t\cos\theta-(x_2\cos\theta-x_1\sin\theta)\sin\theta,
\tilde t\sin\theta+(x_2\cos\theta-x_1\sin\theta)\cos\theta)\,d\tilde t\\
={1\over r}\,T\psi(x_2\cos\theta-x_1\sin\theta,\theta)
\end{gathered}
$$
giving~\eqref{e:Z-t-integrated}.
Here in the second line we make the change of variables $\tilde t=-rt+x_1\cos\theta+x_2\sin\theta$.
\end{proof}
Using the above lemmas, we now compute
the matrix of distributional derivatives $\partial_{\bar{\mathbf z}\mathbf z}E\omega$:
\begin{lemm}
\label{l:E-ddbar}
Let $\omega\in\CIc(\mathbb R^2)$. Then the entries of the matrix $\partial_{\bar{\mathbf z}\mathbf z}E\omega$ lie in $L^1_{\loc}(\mathbb C^2)$ and we have
\begin{equation}
\label{e:E-ddbar}
\partial_{\bar {\mathbf z}\mathbf z}E\omega(\mathbf x+i\mathbf y)={1\over 4 \pi r}\,
T(\partial_{\mathbf x}^2\omega)(x_2\cos\theta-x_1\sin\theta,\theta)
\end{equation}
where we write $\mathbf x=(x_1,x_2)$, $\mathbf y=(r\cos\theta,r\sin\theta)$ with
$r>0$, $\theta\in\mathbb S^1$.
\end{lemm}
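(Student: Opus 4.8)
The plan is to exploit the exact cancellation between the Poisson-type weight $1/(1+t^2)$ in the definition~\eqref{e:E-def} of $E\omega$ and the factor $\tfrac{1+t^2}{4}$ appearing in the identity~\eqref{e:Z-t-ders}. First I would rewrite $E\omega$ as a superposition of the distributions $Z_t*(\omega\otimes\delta)$. Since $1+t^2$ is even in $t$ and, by~\eqref{e:Z-t-conv}, $(Z_t*(\omega\otimes\delta))(\mathbf x+i\mathbf y)=\omega(\mathbf x-t\mathbf y)$, the change of variable $t\mapsto -t$ in~\eqref{e:E-def} yields
$$
E\omega={1\over\pi}\int_{\mathbb R}{Z_t*(\omega\otimes\delta)\over 1+t^2}\,dt,
$$
where the integral converges in $L^1_{\loc}(\mathbb C^2)$ by Lemma~\ref{l:Z-t-conv-L1} (the $L^1(B(0,R))$ norm of the integrand is already $\mathcal O(\langle t\rangle^{-2})$ even without the extra decaying factor).

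Next I would compute $\partial_{\bar{\mathbf z}\mathbf z}E\omega$ distributionally by pairing against a test function $\varphi\in\CIc(\mathbb C^2)$. Moving the second-order operator $\partial_{\bar{\mathbf z}\mathbf z}$ onto $\varphi$ (it is formally self-adjoint for the bilinear pairing, since two integrations by parts produce no sign), substituting the above $L^1_{\loc}$ representation, and interchanging the $t$-integral with the pairing (again licensed by the uniform bound of Lemma~\ref{l:Z-t-conv-L1} together with Fubini) reduces the computation to the fixed-$t$ identity
$$
\partial_{\bar{\mathbf z}\mathbf z}\big(Z_t*(\omega\otimes\delta)\big)
=\big(\partial_{\bar{\mathbf z}\mathbf z}Z_t\big)*(\omega\otimes\delta)
={1+t^2\over 4}\,\big(\partial_{\mathbf x}^2 Z_t\big)*(\omega\otimes\delta)
={1+t^2\over 4}\,Z_t*(\partial_{\mathbf x}^2\omega\otimes\delta),
$$
where I used that derivatives commute with convolution, the identity~\eqref{e:Z-t-ders}, and the fact that the $\mathbf x$-derivatives in $\partial_{\mathbf x}^2$ act only on the $\omega$ factor of $\omega\otimes\delta$. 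The weight $1/(1+t^2)$ now cancels the factor $\tfrac{1+t^2}{4}$, leaving
$$
\partial_{\bar{\mathbf z}\mathbf z}E\omega={1\over 4\pi}\int_{\mathbb R}Z_t*(\partial_{\mathbf x}^2\omega\otimes\delta)\,dt.
$$
Applying Lemma~\ref{l:Z-t-integrated} componentwise with $\psi:=\partial_{\mathbf x}^2\omega$ evaluates this integral (in $L^1_{\loc}$) as $\tfrac{1}{4\pi r}\,T(\partial_{\mathbf x}^2\omega)(x_2\cos\theta-x_1\sin\theta,\theta)$, which is simultaneously the asserted formula~\eqref{e:E-ddbar} and, by the same lemma, an $L^1_{\loc}$ function.

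The routine part is the bookkeeping of the interchanges of integration, convolution, and distributional differentiation, all of which are justified by the $\langle t\rangle^{-2}$ bound of Lemma~\ref{l:Z-t-conv-L1}. The one genuinely delicate point — and the crux of the whole computation — is the exact cancellation of $1+t^2$ supplied by~\eqref{e:Z-t-ders}: it is what converts the a priori singular differentiation of a superposition of the distributions $Z_t*(\omega\otimes\delta)$ into the clean, locally integrable X-ray expression. I would therefore take particular care that passing $\partial_{\bar{\mathbf z}\mathbf z}$ across the convolution is legitimate for the compactly supported distribution $\omega\otimes\delta$ and that~\eqref{e:Z-t-ders} is invoked correctly, since everything downstream rests on this algebraic identity.
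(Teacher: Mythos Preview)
Your proposal is correct and follows essentially the same approach as the paper: express $E\omega$ as the $t$-integral of $(1+t^2)^{-1}Z_t*(\omega\otimes\delta)$, apply~\eqref{e:Z-t-ders} to cancel the $1+t^2$, shift the $\partial_{\mathbf x}^2$ onto~$\omega$ via convolution, and invoke Lemma~\ref{l:Z-t-integrated}. Your explicit mention of the change of variable $t\mapsto -t$ and of Fubini/pairing bookkeeping just spells out what the paper does implicitly.
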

\begin{proof}
Comparing~\eqref{e:E-def} with~\eqref{e:Z-t-conv}, we express $E\omega$ as a distributional convolution: for each $\omega\in\CIc(\mathbb R^2)$
$$
E\omega={1\over\pi}\int_{\mathbb R}{Z_t*(\omega\otimes\delta)\over 1+t^2}\,dt.
$$
Using~\eqref{e:Z-t-ders} and the properties of convolution under differentiation we compute
$$
\begin{aligned}
\partial_{\bar{\mathbf z}\mathbf z}E\omega&=
{1\over\pi}\int_{\mathbb R}{(\partial_{\bar{\mathbf z}\mathbf z}Z_t)*(\omega\otimes\delta)\over 1+t^2}\,dt\\
&=
{1\over 4\pi}\int_{\mathbb R}(\partial_{\mathbf x}^2 Z_t)*(\omega\otimes\delta)
\,dt\\
&=
{1\over 4\pi}\int_{\mathbb R}{Z_t*(\partial_{\mathbf x}^2\omega\otimes\delta)}\,dt.
\end{aligned}
$$
Here by Lemma~\ref{l:Z-t-conv-L1} the integrals above converge in $L^1_{\loc}(\mathbb C^2)$.
It remains to use Lemma~\ref{l:Z-t-integrated} for the functions $\psi:=\partial_{x_j}\partial_{x_k}\omega$.
\end{proof}

\subsection{End of the proof}
\label{s:psh-crit-proof}

We are now ready to give
\begin{proof}[Proof of Proposition~\ref{l:psh-criterion}]
1. We first compute
\begin{equation}
  \label{e:y-abs-der}
\partial_{\bar{\mathbf z}\mathbf z}|\mathbf y|={1\over 4r}A_\theta,\quad
A_\theta:=\begin{pmatrix}\sin^2\theta & -\sin\theta\cos\theta \\ -\sin\theta\cos\theta & \cos^2\theta\end{pmatrix}.
\end{equation}
Here we write $\mathbf y=(r\cos\theta,r\sin\theta)$ with $r>0$ and $\theta\in\mathbb S^1$, differentiation is understood
in the sense of distributions on~$\mathbb C^2$, and~\eqref{e:y-abs-der}
has entries in $L^1_{\loc}(\mathbb C^2)$. To see~\eqref{e:y-abs-der}
we first note that both sides only depend on $\mathbf y=\Im\mathbf z$
and thus can be considered as distributions on $\mathbb R^2$.
Now~\eqref{e:y-abs-der} can be verified by direct computation on $\mathbb R^2\setminus \{0\}$,
where the function $|\mathbf y|$ is smooth, and extends 
to an identity in the sense of distributions on the entire $\mathbb R^2$ by homogeneity, see for example~\cite[Theorem~3.2.3]{Hormander1}.

\noindent 2. Recall that $u=E\omega+\sigma|\mathbf y|$. Combining Lemma~\ref{l:E-ddbar}
and~\eqref{e:y-abs-der} we see that
\begin{equation}
  \label{e:u-total-der}
4\pi\partial_{\bar{\mathbf z}\mathbf z}u(\mathbf x+i\mathbf y)={1\over r}\big(T(\partial_{\mathbf x}^2\omega)(x_2\cos\theta-x_1\sin\theta,\theta)+\pi\sigma A_\theta\big).
\end{equation}
Here derivatives are understood in the sense of distributions on~$\mathbb C^2$
and~\eqref{e:u-total-der} has entries in $L^1_{\loc}(\mathbb C^2)$.

By Proposition~\ref{l:psh-crit}, we see that $u$ is plurisubharmonic if and only if
\begin{equation}
  \label{e:u-psh-int-1}
\big\langle \big(T(\partial_{\mathbf x}^2\omega)(s,\theta)+\pi\sigma A_\theta\big)\mathbf v,\mathbf v\big\rangle_{\mathbb C^2}\geq 0\quad\text{for all }
s\in\mathbb R,\
\theta\in\mathbb S^1,\
\mathbf v\in\mathbb C^2.
\end{equation}
Since $\omega$ is real-valued, the matrix on the left-hand side has real entries. Therefore,
it suffices to verify~\eqref{e:u-psh-int-1} for all vectors of the form $\mathbf v_\gamma=(\cos\gamma,\sin\gamma)$ where $\gamma\in\mathbb S^1$. Using Lemma~\ref{l:XR-prop-1} we compute
$$
\begin{aligned}
\big\langle \big(T(\partial_{\mathbf x}^2\omega)(s,\theta)+\pi\sigma A_\theta\big)\mathbf v_\gamma,\mathbf v_\gamma\big\rangle_{\mathbb C^2}
&=T(V_\gamma^2\omega)(s,\theta)+\pi\sigma \sin^2(\theta-\gamma)\\
&=\big(T(\Delta\omega)(s,\theta)+\pi\sigma\big)\sin^2(\theta-\gamma).
\end{aligned}
$$
It follows that~\eqref{e:u-psh-int-1} is equivalent to
$$
T(\Delta\omega)(s,\theta)+\pi\sigma\geq 0\quad\text{for all }s\in\mathbb R,\
\theta\in\mathbb S^1
$$
which finishes the proof.
\end{proof}

\section{Proof of Theorem~\ref{t:psh-constr}}
\label{s:psh-constr}

In this section we finish the proof of Theorem~\ref{t:psh-constr} by constructing
a weight $\widetilde\omega\leq\omega$ which satisfies the X-ray transform condition~\eqref{e:psh-criterion}.
We follow~\cite[\S4]{Cohen-FUP-v1}.

\subsection{Weights with constant radial integrals}
\label{s:weight-cr}

We start with the special case when for each $k$, the integral of
$|\mathbf x|^{-2}\omega_k$ on lines through the origin is constant
(i.e. does not depend on the choice of the line). In
this case the $\partial_\theta^2$ term in Lemma~\ref{l:XR-prop-2} disappears
and we can take $\widetilde\omega:=\omega$ in Theorem~\ref{t:psh-constr}.
\begin{prop}
  \label{l:weight-level}
Assume that $\omega$ is given by~\eqref{e:omega-form} where $\omega_k$ satisfy the
conditions~\eqref{e:omega-k-ass-1}--\eqref{e:omega-k-ass-4} for some constants $C_{\reg},C_{\gr}$.
Assume moreover that for all $k\geq 1$ and $\theta\in\mathbb S^1$
\begin{equation}
  \label{e:weight-level}
T(|\mathbf x|^{-2}\omega_k)(0,\theta)=-q_k
\end{equation}
where $q_k$ depends on $k$ but does not depend on~$\theta$. Then for $\sigma\geq 0$
large enough depending only on~$C_{\reg},C_{\gr}$ the function $u$ defined by~\eqref{e:psh-u-def}
is plurisubharmonic on~$\mathbb C^2$.
\end{prop}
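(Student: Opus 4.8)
The plan is to reduce plurisubharmonicity to the X-ray transform criterion~\eqref{e:psh-criterion} supplied by Proposition~\ref{l:psh-criterion}, and then verify that criterion using the special structure~\eqref{e:weight-level}. Since $u$ has exactly the form~\eqref{e:psh-u-def}, Proposition~\ref{l:psh-criterion} tells us that $u$ is plurisubharmonic if and only if
\begin{equation*}
T(\Delta\omega)(s,\theta)+\pi\sigma\geq 0\quad\text{for all }s\in\mathbb R,\ \theta\in\mathbb S^1.
\end{equation*}
So the entire task becomes: choose $\sigma$ large enough (depending only on $C_{\reg},C_{\gr}$) to make $\inf_{s,\theta}T(\Delta\omega)(s,\theta)\geq -\pi\sigma$. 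I would first split $T(\Delta\omega)=\sum_k T(\Delta\omega_k)$ using~\eqref{e:omega-form} and~\eqref{e:omega-k-ass-1}, so it suffices to bound $\sum_k T(\Delta\omega_k)(s,\theta)$ from below uniformly in $(s,\theta)$.

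The next step is to distinguish the line through the origin ($s=0$) from general lines. For $s=0$, the hypothesis~\eqref{e:weight-level} together with Lemma~\ref{l:XR-prop-2} is exactly what makes the argument work: since $T(|\mathbf x|^{-2}\omega_k)(0,\theta)=-q_k$ is constant in $\theta$, the $\partial_\theta^2$ term vanishes and Lemma~\ref{l:XR-prop-2} gives
\begin{equation*}
T(\Delta\omega_k)(0,\theta)=(\partial_\theta^2+1)T(|\mathbf x|^{-2}\omega_k)(0,\theta)=-q_k.
\end{equation*}
Summing over $k$ and invoking the growth condition~\eqref{e:omega-k-ass-4}, I get $T(\Delta\omega)(0,\theta)=-\sum_k q_k\geq -C_{\gr}$, which handles all lines through the origin with the choice $\pi\sigma\geq C_{\gr}$. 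Note this requires $\omega_k\in\CIc(\mathbb R^2\setminus\{0\})$ so that Lemma~\ref{l:XR-prop-2} applies; this follows from~\eqref{e:omega-k-ass-2} since the supports avoid the origin.

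The main obstacle is the case of general $s\neq 0$, where there is no analogue of~\eqref{e:weight-level} and we cannot exploit a cancellation. Here I would instead argue by a counting/support estimate: for a fixed line at distance $|s|$ from the origin, by~\eqref{e:omega-k-ass-2} the line meets $\supp\omega_k$ only for those $k$ with $2^{k-1}\leq |s|$ is compatible, i.e. roughly $2^k\gtrsim |s|$, so only $\mathcal O(1)$ dyadic scales around $\log_2|s|$ contribute meaningfully once one accounts for the fact that the line can pass through annuli of comparable radius. For each such contributing $k$, the regularity bound~\eqref{e:omega-k-ass-3x} with $|\gamma|=2$ gives $\sup|\partial^\gamma\omega_k|\leq \widetilde C_{\reg}2^{-k}$, and since the line segment inside the annulus $\{2^{k-1}\leq|\mathbf x|\leq 2^{k+1}\}$ has length $\mathcal O(2^k)$, integrating $\Delta\omega_k$ along the line yields $|T(\Delta\omega_k)(s,\theta)|\leq K\widetilde C_{\reg}$. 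Summing the $\mathcal O(1)$ nonzero contributions produces a bound $T(\Delta\omega)(s,\theta)\geq -K\widetilde C_{\reg}$ depending only on $C_{\reg}$. Combining the two cases, taking $\pi\sigma\geq\max(C_{\gr},K\widetilde C_{\reg})$ makes~\eqref{e:psh-criterion} hold everywhere, and Proposition~\ref{l:psh-criterion} then gives plurisubharmonicity. The delicate point to get right is the precise counting of how many scales $k$ intersect a given far-away line and the uniformity of the per-scale bound in $(s,\theta)$, which is where the apriori estimate~\eqref{e:q-k-apriori} and the scaling relation~\eqref{e:XR-KN-OK} will be convenient.
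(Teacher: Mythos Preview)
Your reduction via Proposition~\ref{l:psh-criterion} and the treatment of the case $s=0$ via Lemma~\ref{l:XR-prop-2} are correct and match the paper. The gap is in the case $s\neq 0$: the claim that ``only $\mathcal O(1)$ dyadic scales around $\log_2|s|$ contribute'' is false. A line at distance $|s|$ from the origin meets \emph{every} annulus $\{2^{k-1}\leq|\mathbf x|\leq 2^{k+1}\}$ with $2^{k+1}\geq |s|$, and the number of such $k$ for which $\omega_k\not\equiv 0$ is bounded only by the (uncontrolled) number of nonzero summands in~\eqref{e:omega-k-ass-1}, not by $C_{\reg}$ or $C_{\gr}$. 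Your per-scale estimate $|T(\Delta\omega_k)(s,\theta)|\leq K\widetilde C_{\reg}$ is correct but does not decay in $k$, so summing it over all these scales does not yield a bound depending only on $C_{\reg},C_{\gr}$.

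The paper's remedy is not to bound each $T(\Delta\omega_k)(s,\theta)$ outright but to compare it to the already-computed value at $s=0$. For $k$ with $2^{k+1}\geq |s|$ one writes, after reducing to $\theta=0$,
\[
T(\Delta\omega_k)(s,0)-T(\Delta\omega_k)(0,0)=\int_{\mathbb R}\int_0^s \partial_{x_2}\Delta\omega_k(x_1,x_2)\,dx_2\,dx_1,
\]
and now the \emph{third}-derivative case of~\eqref{e:omega-k-ass-3x} gives $\sup|\partial_{x_2}\Delta\omega_k|\leq KC_{\reg}2^{-2k}$, while the integration region has area $\leq 2^{k+2}|s|$. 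Thus the $k$th difference is $\leq KC_{\reg}|s|\,2^{-k}$, which is summable over $k$ with $2^{k}\gtrsim |s|$ to $KC_{\reg}$. Since $T(\Delta\omega_k)(0,\theta)=-q_k\leq 0$, the scales with $2^{k+1}<|s|$ (where $T(\Delta\omega_k)(s,\theta)=0$) can simply be dropped from the $s=0$ sum, and one concludes $T(\Delta\omega)(s,\theta)\geq -C_{\gr}-KC_{\reg}$. Note that the second-derivative bound you invoked is not enough here: it gives only $2^{-k}$ pointwise, which after integrating over a segment of length $\sim 2^k$ yields a constant per scale and no summability. This is precisely why the norm in~\eqref{e:KN-def} carries three derivatives.
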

\begin{proof}
Throughout the proof the letter $K$ denotes a global constant whose
value may change from place to place.

1. By Proposition~\ref{l:psh-criterion}, it suffices to show that for
$\sigma$ large enough depending only on~$C_{\reg},C_{\gr}$ we have
\begin{equation}
  \label{e:psh-crit-again}
T(\Delta\omega)(s,\theta)+\pi\sigma\geq 0\quad\text{for all }s\in\mathbb R,\
\theta\in\mathbb S^1.
\end{equation}
We first consider the special case $s=0$. In this case by Lemma~\ref{l:XR-prop-2}
and~\eqref{e:weight-level} we compute
\begin{equation}
  \label{e:weight-level-int-1}
T(\Delta\omega_k)(0,\theta)=-q_k\quad\text{for all }k\geq 1,\ \theta\in\mathbb S^1.
\end{equation}
Together with~\eqref{e:omega-k-ass-4} this shows that~\eqref{e:psh-crit-again} holds for $s=0$ as long
as $\pi\sigma\geq C_{\gr}$.

To handle the case of general $s$, we will show the following estimate
which only needs the conditions~\eqref{e:omega-k-ass-1}--\eqref{e:omega-k-ass-3}:
\begin{equation}
  \label{e:weight-level-int-2}
\bigg|T(\Delta\omega)(s,\theta)-\sum_{k\geq 1\atop 2^{k+1}\geq |s|}T(\Delta\omega_k)(0,\theta)\bigg|\leq K C_{\reg}.
\end{equation}
Together with~\eqref{e:weight-level-int-1}, \eqref{e:omega-k-ass-4}, and the fact that $q_k\geq 0$
this shows that~\eqref{e:psh-crit-again} holds as long as $\pi\sigma\geq C_{\gr}+KC_{\reg}$.

2. It remains to show~\eqref{e:weight-level-int-2}. By the discrete symmetry~\eqref{e:XR-prop-discrete}
and equivariance under rotations~\eqref{e:XR-prop-rotate} we reduce to the case when $\theta=0$ and $s\geq 0$,
that is it suffices to show that
\begin{equation}
  \label{e:weight-level-int-3}
\bigg|T(\Delta\omega)(s,0)-\sum_{k\geq 1\atop 2^{k+1}\geq s}T(\Delta\omega_k)(0,0)\bigg|\leq KC_{\reg}.
\end{equation}
Fix $s\geq 0$. If $k$ is such that $2^{k+1}<s$, then by~\eqref{e:omega-k-ass-2} the
support of $\Delta\omega_k$ lies in the ball $B(0,s)$ and thus
$T(\Delta\omega_k)(s,0)=0$. Thus~\eqref{e:weight-level-int-3} becomes
\begin{equation}
  \label{e:weight-level-int-4}
\bigg|\sum_{k\geq 1\atop 2^{k+1}\geq s}\int_{\mathbb R}\big(\Delta\omega_k(x_1,s)-\Delta\omega_k(x_1,0)\big)\,dx_1\bigg|\leq K C_{\reg}.
\end{equation}
By the triangle inequality and the fundamental theorem of calculus in the $x_2$ variable,
the left-hand side of~\eqref{e:weight-level-int-4} is bounded by
$$
\sum_{k\geq 1\atop 2^{k+1}\geq s}\int_{\mathbb R\times [0,s]}\big|\partial_{x_2} \Delta\omega_k(x_1,x_2)\big|\,dx_1dx_2.
$$
By~\eqref{e:omega-k-ass-3} and~\eqref{e:omega-k-ass-2} we have
$\sup|\partial_{x_2}\Delta\omega_k|\leq K C_{\reg}2^{-{2k}}$.
On the other hand, by~\eqref{e:omega-k-ass-2} the intersection of~$\mathbb R\times [0,s]$ with
$\supp\partial_{x_2}\Delta\omega_k$ is contained in $[-2^{k+1},2^{k+1}]\times [0,s]$, which has
area $2^{k+2}s$. It follows that the left-hand side of~\eqref{e:weight-level-int-4} is bounded by
$$
KC_{\reg}\,s \sum_{k\geq 1\atop 2^{k+1}\geq s} 2^{-k}\leq KC_{\reg},
$$
finishing the proof of~\eqref{e:weight-level-int-2}.
\end{proof}

\subsection{Modifying a general weight}
\label{s:weight-mod}

We now give
\begin{proof}[Proof of Theorem~\ref{t:psh-constr}]
1. Assume that $\omega$ is given
by~\eqref{e:omega-form} where $\omega_k$ satisfy the
conditions~\eqref{e:omega-k-ass-1}--\eqref{e:omega-k-ass-4} for some constants $C_{\reg},C_{\gr}$.
We subtract a function from each $\omega_k$ to obtain a weight $\widetilde\omega_k$
satisfying the additional condition~\eqref{e:weight-level}.
To do this, fix a cutoff function
$$
\chi\in\CIc\big((-2,-\tfrac12)\cup (\tfrac 12,2);[0,\infty)\big),\quad
\chi(-t)=\chi(t),\quad
\int_{\mathbb R}t^{-2}\chi(t)\,dt=1.
$$
For each $k\geq 1$, define the function $\psi_k\in\CIc(\mathbb R^2\setminus \{0\})$ in polar coordinates:
for $t\in\mathbb R$ and $\theta\in\mathbb S^1$ put
$$
\psi_k(t\cos\theta,t\sin\theta):=2^k\chi(2^{-k}t)\big(T(|\mathbf x|^{-2}\omega_k)(0,\theta)+q_k\big).
$$
Now, define the modified weight
$$
\widetilde \omega:=\sum_{k\geq 1}\widetilde\omega_k\quad\text{where }
\widetilde\omega_k:=\omega_k-\psi_k.
$$

\noindent 2. We show several properties of $\widetilde\omega_k$. Recall from~\eqref{e:omega-k-ass-4} that
$$
q_k:=-\inf_{\theta\in\mathbb S^1}T(|\mathbf x|^{-2}\omega_k)(0,\theta).
$$
It follows
that $\psi_k\geq 0$ and thus $\widetilde\omega_k\leq\omega_k$. This shows that
$\widetilde\omega\leq\omega$.

It is easy to see that $\widetilde\omega_k$ satisfy the conditions~\eqref{e:omega-k-ass-1}--\eqref{e:omega-k-ass-2}.
Moreover, by~\eqref{e:XR-KN-OK} and~\eqref{e:XR-KN-OK2} we see that
$\|\widetilde\omega_k\|_{S^1_3}\leq K\|\omega_k\|_{S^1_3}$ for some global constant $K$.
Therefore
$\widetilde\omega_k$ satisfy the Kohn--Nirenberg regularity condition~\eqref{e:omega-k-ass-3} with constant $KC_{\reg}$ where $C_{\reg}$ is the constant in the regularity condition~\eqref{e:omega-k-ass-3} for~$\omega_k$.

Finally, we compute for each $\theta\in\mathbb S^1$
$$
T(|\mathbf x|^{-2}\widetilde\omega_k)(0,\theta)=T(|\mathbf x|^{-2}\omega_k)(0,\theta)-\int_{\mathbb R}
t^{-2}\,\psi_k(t\cos\theta,t\sin\theta)\,dt=-q_k.
$$
Therefore, $\widetilde\omega_k$ satisfy the additional condition~\eqref{e:weight-level}
and thus also the growth condition~\eqref{e:omega-k-ass-4} (with the same values of $q_k$
as for $\omega_k$).

\noindent 3. It remains to apply Proposition~\ref{l:weight-level} with $\omega_k$
replaced by $\widetilde\omega_k$ to see that
for $\sigma$ large enough depending only on~$C_{\reg},C_{\gr}$, the function
$\widetilde u$ defined by~\eqref{e:u-intro-form} is plurisubharmonic on~$\mathbb C^2$.
\end{proof}

\medskip\noindent\textbf{Acknowledgements.}
I am grateful to Alex Cohen for sharing the early versions of~\cite{Cohen-FUP-v1} with me
and many discussions.
I was supported by NSF CAREER grant DMS-1749858.

\bibliographystyle{alpha}
\bibliography{General,Dyatlov,QC}

\end{document}